\makeatletter \renewcommand{\everyentry@}{\vphantom{A_{[]}ˆ{[]}}}
\newtheorem{theorem}{Theorem}[subsection]
\newtheorem{lemma}[theorem]{Lemma}
\newtheorem{prop}[theorem]{Proposition}
\theoremstyle{definition}
\newtheorem{definition}[theorem]{Definition}
\newtheorem{example}[theorem]{Example}
\theoremstyle{remark}
\newtheorem{remark}[theorem]{Remark}
\numberwithin{equation}{subsection}
 \def \E{\mathcal E}
\def \Z {\mathbb Z}
\def \inj {\hookrightarrow }
\def \to {\rightarrow}
\def \spec \text{spec}
 \def \M{\mathfrak M}
\def \e { \underline \epsilon}
\def \p {\underline \pi}
\def  \cris {\text{cris}}
\DeclareMathOperator{\gal}{Gal}
\def \Q {\mathbb Q}
\def \t {\textnormal}
\def \Z {\mathbb Z}
\def \O {\mathcal O}
\def \gs {\mathfrak S}
\def \ur {\t{ur}}
\def \D {\mathcal D}
\def \N {\mathfrak N}
\def \v {\vee}
\def \f {\mathfrak f}
\def \Fr {\t{Fr}}
\def \gt {\mathfrak t}
\def \acris {{A_{\t{cris}}}}
\def \R {\mathcal R}
\def \sfr {\t{Mod}_{/\gs}^{r, \t{fr}}}
\def \st {\textnormal{st}}
\def \< {\left <}
\def \> {\right >}
\def \hR {\widehat \R }
\def \hM {{\hat \M}}
\begin{document}

\title{A note on lattices in semi-stable representations}

\author{Tong Liu}
\address{Department of Mathematics, University of Pennsylvania, Philadelphia,19104, USA.}
\email{tongliu@math.upenn.edu}


\subjclass{Primary  14F30,14L05}



\keywords{semi-stable representations, Kisin modules, $G$-stable
$\Z_p$-lattices }

\begin{abstract}
Let $p\geq 3$ be a prime,  $K$ a finite extension over $\Q_p$ and $G:= \gal(\bar K
/K)$. We extend Kisin's theory on $\varphi$-modules of finite $E(u)$-height to
give a new classification of $G$-stable $\Z_p$-lattices in semi-stable representations.
\end{abstract}

\maketitle


\tableofcontents
\section{Introduction}
This note serves as a new idea to classify lattices in the
semi-stable representations.  Let $k$ be a perfect field of
characteristic $p>2$, $W(k)$ its ring of Witt vectors,
$K_0=W(k)[\frac{1}{p}]$, $K/K_0$ a finite totally ramified extension
and $G := \gal(\bar K /K)$. For many technical reasons,  we are
interested in classifying  $G$-stable $\Z_p$-lattices in
semi-stable $p$-adic Galois representations,  via linear algebra
data like admissible
 filtered $(\varphi, N)$-modules in Fontaine's theory.
Many important steps have been made  in this direction. For example,
Fontaine and Laffaille' theory \cite{fo1} on strongly divisible
$W(k)$-lattices in filtered $(\varphi, N)$-modules,  Breuil's theory
on strongly divisible $S$-lattices  (\cite{b6}, \cite{liu3}), and
Berger and Breuil's theory on Wach modules (\cite{Ber3}).
Unfortunately, these classifications always have some restrictions
(on the absolute ramification index, Hodge-Tate weights, etc). Based
on Kisin's theory in \cite{kisin2}, the aim of this paper is to
provide a classification without these restrictions (at least for
$p>2$).

More precisely, let $E(u)$ be an Eisenstein polynomial for a fixed
uniformizer $\pi$ of $K$,  $K_\infty = \cup_{n\geq1} K(
\sqrt[p^n]{\pi})$, $G_\infty = \gal(\bar K/ K_\infty)$ and $\gs=
W(k)[\![u]\!]$.  We equip $\gs$ with the endomorphism $\varphi$
which acts via Frobenius on $W(k)$, and sends $u$ to $u^p$. Let
$\sfr$ denote the category of   finite free $\gs$-modules $\M$
equipped with a $\varphi$-semi-linear map $\varphi_\M: \M \to \M $
such that the
 cokernel of $\gs$-linear map $1\otimes \varphi_\M: \gs  \otimes_{\varphi, \gs} \M \to \M$
is killed by $E(u)^r$. Objects in $\sfr$  are called
\emph{$\varphi$-modules of $E(u)$-height $r$} or \emph{Kisin
modules}. In \cite{kisin2}, Kisin proved that any $G_\infty$-stable
$\Z_p$-lattice $T$ in a semi-stable Galois representation comes from
a Kisin module (See Theorem \ref{kisin} for details). Obviously,
extra data have to be added if one would like to extend the
classification of $G_\infty$-stable lattices to the classification
of $G$-stable lattices. Our ideal is to imitate the theory of
$(\varphi, \Gamma)$-modules. But $G_\infty$ is not a normal subgroup
of $G$ and there is no natural $G$-action on $\gs$. To remedy this,
we construct a $\gs$-algebra $\hR$ inside $W(R)$ such that $\hR$ is
stable under Frobenius and the $G$-action. Furthermore, the
$G$-action on $\hR$ factors through $\hat G := \gal(K_{\infty, p^\infty }/K)$
where $K_{\infty, p^\infty }$ is the Galois closure of $K_\infty$.
The construction of $\hR$ allows us to define $(\varphi, \hat
G)$-module which is Kisin module $(\M, \varphi)$ with extra
semi-linear $\hat G$-action on $\hR \otimes_{\varphi, \gs} \M$
compatible with Frobenius (see Definition \ref{definition} for
details). Our main result in this note is that the category of
$G$-stable $\Z_p$-lattices in semi-stable representations with
Hodge-Tate weights in $\{0, \dots, r \}$ is anti-equivalent to the
category of $(\varphi, \hat G)$-modules of $E(u)$-height $r$.

Just as any integral version of $p$-adic Hodge theory before,
$(\varphi, \hat G)$-modules will help us better to understand the
reduction of semi-sable representations, and this will be discussed
in forthcoming work. On the other hand, so far we do not fully
understand the structure of $\hR$. In fact, $\hR$ seems quite
complicated (See Example \ref{example}). So at least at this stage,
it seems that our theory only serves as a theoretic approach.
 We hope we can simplify  this theory in
the future by further exploring the structure of $\hR$,  such that
we could  provide more explicit examples or carry out some concrete
computations by $(\varphi, \hat G)$-modules.

\section{Preliminary and the Main Result}\label{sa2}
\subsection{Kisin Modules}
Recall that $k$ is a perfect field of characteristic $p>2$, $W(k)$
its ring of Witt vectors, $K_0=W(k)[\frac{1}{p}]$, $K/K_0$ a finite
totally ramified extension and $e=e(K/K_0)$ the absolute
ramification index. Throughout this paper we fix a uniformiser
$\pi\in K$ with Eisenstein polynomial $E(u)$. Recall that $\gs =
W(k)\llbracket u\rrbracket$ is equipped with a Frobenius
endomorphism $\varphi$ via $u\mapsto u^p$ and the natural Frobenius
on $W(k)$. Throughout this paper we reserve
$\varphi$ to denote various Frobenius structures.
A \emph{$\varphi$-module} (over $\gs$) is an $\gs$-module
$\M$ equipped with a $\varphi$-semi-linear map $\varphi: \M \to \M
$. A morphism between two objects $(\M_1, \varphi_1)$, $(\M_2,
\varphi_2)$ is a $\gs$-linear morphism compatible with the
$\varphi_i$. Denote by $\sfr$ the category of
\emph{$\varphi$-modules of  $E(u)$-height $r$}  in the sense
that  $\mathfrak M $ is finite  free \footnote{This is a somewhat ad
hoc definition because we only concern finite free $\gs$-modules
here. In fact, one may only require that $\M$ is of $\gs$-finite
type when define $\varphi$-modules of finite $E(u)$-height,
especially, when study $p$-power torsion representations.}
 over $\gs$ and the cokernel of $\varphi^*$ is
killed by $E(u)^r$, where $\varphi^*$ is the $\gs$-linear map
$1\otimes \varphi: \gs \otimes_{\varphi, \gs}\M \to \M $. Object in
$\sfr$ is also called \emph{Kisin module $($of height
\footnote{Throughout this paper, the height is always $E(u)$-height.
So we always omit ``$E(u)$".} $r)$}.

Let $R= \varprojlim \O_{\bar K }/p$ where the transition maps are
given by Frobenius.  By the universal property of  the Witt vectors
$W(R)$ of $R$, there is a unique surjective projection map $\theta :
W(R) \to \widehat \O_{\bar K}$ to the $p$-adic completion of
$\O_{\bar K}$, which lifts the  the projection $R \to \O_{\bar K}/ p$
onto the first factor in the inverse limit.
Let $\pi_n\in \bar K $ be a $p^n$-th root of $\pi$, such that
$(\pi_{n+1})^p=\pi_n$; write $\underline \pi=(\pi_n)_{n\geq 0}\in R$
and let $[\underline \pi ]\in W(R)$ be the Techm\"uller
representative. We embed the $W(k)$-algebra $W(k)[u]$ into $W(R)$ by
the map $u\mapsto [\underline \pi]$. This embedding extends to an
embedding $\gs \inj W(R)$, and, as $\theta([\p])= \pi$, $\theta|_
\gs $ is the map $\gs \to \O_K $ sending $u$ to $\pi$. This
embedding is compatible with Frobenious endomorphisms.

Denote by $\O_\E$ the $p$-adic completion of $\gs[\frac{1}{u}]$.
Then $\O_\E $ is a discrete valuation ring with residue field
the Laurent series ring $k(\!(u)\!)$. We write $\E$ for the field of
fractions of $\O_\E$. If $\Fr R$ denotes the field of fractions
of $R$, then the inclusion $\gs \inj W(R)$ extends to an inclusion
$\O_\E\inj W(\Fr R)$. Let $\E^{\t{ur}}\subset W(\t{Fr}R)[\frac{1}{p}]$
denote the maximal unramified extension of $\E$ contained in
$W(\Fr R)[\frac{1}{p}]$, and $\O^{\t{ur}} $  its ring of integers.
Since $\Fr R$ is easily seen to be algebraically  closed, the residue field
$\O^{\t{ur}}/p\O^{\t{ur}}$ is the separable closure of $k(\!(u)\!)$. We
denote by $\widehat {\E^{\ur}}$ the $p$-adic completion of $\E^{\ur}$,
and by $\widehat{ \O^{\ur}}$ its ring of integers. $\widehat{\E^{\ur}}$
is also equal to the closure of $\E ^\ur $ in $W(\Fr R)[\frac{1}{p}]$.
We write $\gs^\ur =\widehat{ \O^{ \ur }}\cap W(R)\subset W(\Fr R)$.
We regard all these rings as subrings of $W(\Fr R)[\frac{1}{p}]$.

Recall that $K_\infty =\bigcup_{n\geq 0}K(\pi_n)$, and $G_\infty=
\t{Gal}(\bar K /K_\infty)$.  $G_\infty$  acts continuously on $\gs
^\ur $ and $\E^\ur$ and
 fixes the subring $\gs
\subset W(R)$. Finally, we
denote by $\t{Rep}_{\Z_p}(G_\infty)$ the category of continuous
$\Z_p$-linear representations of $G_\infty$ on finite free $\Z_p$-modules.

 For any Kisin module $(\M, \varphi)$, one can associate a $\Z_p[G_\infty]$-module:
 $$T_\gs(\M):= \t{Hom}_{\gs, \varphi}(\M, \gs^\ur).$$
One can show that $T_\gs(\M)$ is finite free over $\Z_p$ and
$\t{rank}_{\Z_p}(T_\gs(\M)) = \t{rank}_{\gs}(\M)$ (see for example,
Corollary (2.1.4) in \cite{kisin2}). Let $V$ be a continuous linear
representation of $G:= \gal(\bar K /K)$ on a finite dimensional
$\Q_p$-vector space. $V$ is called of \emph{$E(u)$-height $r$} if
there exists a $G_\infty$-stable $\Z_p$-lattices $T\subset V$ and  a
Kisin module $\M \in \sfr$ such that  $T \simeq T_\gs(\M)$. We refer
\cite{fo7}  to the notion of \emph{semi-stable} $p$-adic
representations\footnote{A $p$-adic representation $V$ is called
semi-stable if $\t{dim}_{K_0}(V \otimes_{\Q_p}B_\st)^G=
\t{dim}_{\Q_p}V$. See \cite{fo3} for the construction of $B_\st$.}.
 The following theorem summarizes the
known results on the relation between semi-stable representations and representations of finite $E(u)$-height.
\begin{theorem}[\cite{kisin2}]\label{kisin}\begin{enumerate} \item The functor $T_\gs : \sfr\to \t{Rep}_{\Z_p}(G_\infty)$ is fully faithful.
\item A semi-stable representation with Hodge-Tate weights in $\{0, \dots, r\}$ is of finite $E(u)$-height $r$.
\end{enumerate}
\end{theorem}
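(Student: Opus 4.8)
The plan is to prove the two parts by different routes. \emph{Part (1).} I would factor $T_\gs$ through the category of \emph{\'etale} $\varphi$-modules over $\O_\E$ and then invoke Fontaine's classification. Since $E(u)\equiv u^e\pmod{p}$ and $u$ is a unit in $\O_\E$, the element $E(u)$ is invertible there, so $\M\mapsto\O_\E\otimes_\gs\M$ carries $\sfr$ into the category of \'etale $\varphi$-modules over $\O_\E$. As $K_\infty/K$ is an arithmetically profinite extension with field of norms $k(\!(u)\!)$, whose separable closure sits inside $\widehat{\O^\ur}/p$ with $G_\infty$ acting, Fontaine's theory gives an anti-equivalence between \'etale $\varphi$-modules over $\O_\E$ and $\t{Rep}_{\Z_p}(G_\infty)$ (via $\t{Hom}_{\O_\E,\varphi}(-,\widehat{\O^\ur})$), and since $\gs^\ur=\widehat{\O^\ur}\cap W(R)$ one has
\[
T_\gs(\M)=\t{Hom}_{\gs,\varphi}(\M,\gs^\ur)=\t{Hom}_{\O_\E,\varphi}(\O_\E\otimes_\gs\M,\widehat{\O^\ur}).
\]
Thus (1) reduces to the full faithfulness of $\O_\E\otimes_\gs-$ on $\sfr$: faithfulness is immediate from $\M\inj\O_\E\otimes_\gs\M$, and fullness --- the one substantive point --- amounts to showing that a $\varphi$-equivariant $\O_\E$-linear map $\O_\E\otimes_\gs\M_1\to\O_\E\otimes_\gs\M_2$ sends $\M_1$ into $\M_2$. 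For this I would use the finite $E(u)$-height: the image of $\M_1$ spans a finitely generated, $\varphi$-stable, height-bounded $\gs$-submodule of $\O_\E\otimes_\gs\M_2$ containing $\M_2$, and --- exploiting that $\varphi(u)=u^p$ amplifies $u$-adic pole orders while a bounded-height Frobenius cannot compensate (this is also the reason behind the elementary identity $\gs=\gs[1/p]\cap\O_\E$) --- such a submodule is forced to lie in $\M_2$; this is a lemma of Kisin \cite{kisin2}.

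\emph{Part (2).} Let $V$ be semi-stable with Hodge-Tate weights in $\{0,\dots,r\}$ and $D$ the associated weakly admissible filtered $(\varphi,N)$-module, with $\t{Fil}^\bullet D_K$ concentrated in degrees $[0,r]$. The plan is to manufacture $\M\in\sfr$ for which $T_\gs(\M)$ is a $G_\infty$-stable lattice in $V$, in three steps. First, build from $D$ a $\varphi$-module $\CM(D)$ over the ring $\O$ of rigid-analytic functions on the open unit disc: writing $\lambda=\prod_{n\ge0}\varphi^n\!\big(E(u)/E(0)\big)\in\O$, one takes $\CM(D)$ to agree away from the zeros of $\lambda$ with the constant $\varphi$-module on $D$ (the monodromy $N$ entering via a connection), and pins it down near the zeros of $E(u)$ using $\t{Fil}^\bullet D_K$ through the specialization $u\mapsto\pi$, so that $\CM(D)$ has $E(u)$-height $\le r$ and $\CM(D)/u\CM(D)$ recovers $(D,\t{Fil}^\bullet D_K)$. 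Second, view $\CM(D)$ over the Robba ring and apply Kedlaya's slope-filtration theorem: weak admissibility of $D$ forces $\CM(D)$ to be pure of slope $0$, hence to correspond to a $p$-adic representation of $G_\infty$, which a period computation in $B_\st$ (through $u\mapsto[\p]$ and $N$) identifies with $V|_{G_\infty}$. Third, descend: a $\varphi$-stable $\widehat{\O^\ur}$-lattice in that slope-$0$ module meets $W(R)\otimes\CM(D)$ in a $\varphi$-stable finite free $\gs$-lattice $\M$, which has $E(u)$-height $\le r$ because $\CM(D)$ does; any $G_\infty$-stable $\Z_p$-lattice $T\subset V$ with $T\simeq T_\gs(\M)$ then exhibits $V$ as being of $E(u)$-height $r$.

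The hard part is Part (2): producing the \emph{integral} lattice $\M$, identifying $T_\gs(\M)[1/p]$ with $V|_{G_\infty}$, and controlling the $E(u)$-height by $r$. This rests on Kedlaya's theorem (to see that $\CM(D)$ is slope $0$), on building $\CM(D)$ with exactly the right behaviour near $E(u)=0$, and on the delicate interplay of $\lambda$, $E(u)$ and $\t{Fil}^\bullet D_K$ inside $\CM(D)/u\CM(D)\cong D$ --- this last being where the Hodge-Tate bound $r$ genuinely enters --- together with the descent from the Robba ring back to $\gs$. Part (1), by contrast, is essentially formal once Fontaine's equivalence for $\O_\E$ and the finite-height full-faithfulness lemma are in hand.
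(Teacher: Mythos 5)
This theorem is quoted in the paper from \cite{kisin2} without proof, and your outline reproduces Kisin's own argument: part (1) via passage to \'etale $\varphi$-modules over $\O_\E$ (Fontaine's field-of-norms equivalence for $G_\infty$) together with the full faithfulness of $\M\mapsto\O_\E\otimes_\gs\M$ on finite-height modules, and part (2) via the construction of $\CM(D)$ over the open unit disc using $\lambda$ and the filtration at $u\mapsto\pi$, Kedlaya's slope-filtration theorem applied through weak admissibility, and descent to a finite free $\gs$-lattice of height at most $r$. So the proposal is correct and takes essentially the same route as the cited source, there being no independent proof in the paper itself to compare against.
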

\begin{remark} \begin{enumerate}
\item Suppose that $V$ is of $E(u)$-height $r$. Then it is easy
to show that any $G_\infty$-stable $\Z_p$-lattice $T \subset V$
comes from a Kisin module  $\N \in \sfr$, i.e., $T \simeq
T_\gs(\N)$. See the proof of Lemma (2.1.15) in \cite{kisin2}.

\item  It is natural to ask if the converse question for Theorem
\ref{kisin} (2) is true. As we will see later, our results in this
note may be regarded as partial results in this direction.
\end{enumerate}
\end{remark}

\subsection{ ($\varphi, \hat G$)-modules }\label{S2}
We denote  by $S$  the $p$-adic completion of the divided power
envelope of $W(k)[u]$ with respect to the ideal generated by $E(u)$.
There is a unique map (Frobenius) $\varphi: S \to S$ which extends
the Frobenius on $\gs$. Define a continuous $K_0$-linear derivation
$N: S \to S$ such that $N(u)= -u$.  We denote $S[1/p]$ by $S_{K_0}$.

Recall $R= \varprojlim \O_{\bar K }/p$ and the  unique surjective
map $\theta: W(R) \to \widehat{\O_{\bar K }}$  which lifts the
projection $R \to \O_{\bar K }/p$ onto the first factor in the
inverse limit. We denote by $A_{\t {cris}}$ the $p$-adic completion
of the divided power envelope of $W(R)$ with respect to
$\t{Ker}(\theta)$. Recall that $[\p] \in W(R)$ is the Teichm\"uller
representative of $\p = (\pi_n)_{n\geq 0} \in R$ and We embed the
$W(k)$-algebra $W(k)[u]$ into $W(R)$ via $u \mapsto [\p]$. Since
$\theta(\p)= \pi$, this embedding extends to an embedding $\gs \inj
S \inj A_{\t{cris}}$, and $\theta |_S$ is the $K_0$-linear map $s: S
\to \O_K $ defined by sending $u$ to $\pi$. The embedding is
compatible with Frobenius endomorphisms. As usual, we write
$B_\t{cris}^+= A_\t{cris}[1/p]$.


 For any field extension $F/\Q_p$, set $F_{p^\infty}=\bigcup \limits_{n=1}^\infty F
(\zeta_ {p^n})$ with $\zeta _{p^n} $ a primitive $p^n$-th root of
unity. Note that $K_{\infty, p^\infty}= \bigcup \limits
_{n=1}^\infty K({\pi_n}, \zeta_{p^n})$ is Galois over $K$. Let $G_0
:= \gal(K_{\infty, p^\infty}, K_{p^\infty})$, $H_K:= \gal
(K_{\infty, p^\infty}, K_\infty)$ and $\hat G: =\gal (K_{\infty,
p^\infty}/K) $.
 By  Lemma 5.1.2 in
\cite{liu3}, we have $K_{p^\infty} \cap K_\infty = K$,  $\hat G=
G_0 \rtimes H_K$ and $G_0 \simeq \Z_p(1)$.

For any $g \in G$, let $\e(g)= {g([\p])}/{[\p]}$. Then $\e(g)$ is a
cocycle from $G$ to the group of  units of $\acris$. In particular,
fixing a topological generator $\tau$ of $G_0$, the fact that $\hat G=
G_0 \rtimes H_K$  implies that
$\e(\tau)= [(\epsilon_{i})_{i \geq 0}] \in W(R)$ with $\epsilon_{i}$
a \emph{primitive} $p^i$-th root of unity. Therefore, $t := -\log(\e(\tau))
\in \acris$ is well defined and for any $g \in G$, $g(t) = \chi(g)t$
where $\chi$ is the cyclotomic character.

For any integer $n\geq 0$, let $t^{\{n\}}= t ^{r(n)} \gamma_{\tilde
q(n)}(t^{p-1}/p)$ where $n = (p-1)\tilde q(n) + r(n)$ with $ 0 \leq
r(n)< p-1$ and $\gamma_i(x) = \frac{x^i}{i!}$ is the standard
divided power. Define a subring $\R_{K_0}$ of $ B^+_\cris$ as in
\S6, \cite{liu2}:
  $$\R_{K_0}=\{x = \sum_{i=0 }^\infty f_i t^{\{i\}}, f_i \in S_{K_0}
    \t{ and } f_i \to 0\t{ as }i \to +\infty \}.$$
Finally we put $\hat \R := \R_{K_0} \cap W(R)$.

It is easy to see that $\R_{K_0}$ is an $S$-algebra and $\varphi$-stable as a subring of $B^+_\cris$. We claim that $\R_{K_0}$ is also $G$-stable.
In fact, it suffices to show that for any $g \in G$, $x \in S$, we have $g(x) \in \R_{K_0}$.
First note that for any $g \in G_\infty$, $g(x)= x$. Recall that $\hat G=G_0 \rtimes H_K$ and $\tau$
the fixed topological generator in $G_0$. It suffices to check that $\tau(x) \in \R_{K_0}$. But we have
$$\tau (x) := \sum \limits_{i=0}^\infty N^i(x) \gamma_i(-\log(\e(\tau))) = \sum_{i=0}^\infty N^i(x) \gamma_i (t).$$
Therefore $\tau(x) \in \R_{K_0}$ and $\R_{K_0}$ is $G$-stable. In fact, the $G$ action on $\R_{K_0} $
factors through $\hat G$. Hence we obtain some elementary facts on $\hR$.
\begin{lemma} \begin{enumerate}\item  $\hat \R$ is a $\varphi$-stable $\gs$-algebra as a subring in $W(R)$.
\item $\hat \R$ is $G$-stable. The $G$-action on $\hR $ factors though $\hat G$.
\end{enumerate}
\end{lemma}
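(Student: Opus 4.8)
The plan is to deduce both parts from the properties of $\R_{K_0}$ already recorded in the paragraph preceding the lemma, using only that $\hR$ is by definition the intersection $\R_{K_0}\cap W(R)$ taken inside $B^+_\cris$, and that intersections of subrings stable under an operator (Frobenius, or an element of $G$) are again stable under that operator. So almost no new computation is needed at the level of the lemma itself; the substance lies in the displayed computation for $\R_{K_0}$.

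First I would fix the ambient picture: $W(R)\subset B^+_\cris$ is stable under the Witt-vector Frobenius $\varphi$ and under the $G$-action obtained by functoriality of $W(-)$ from the $G$-action on $R=\varprojlim\O_{\bar K}/p$; and, by the discussion above, $\R_{K_0}\subset B^+_\cris$ is a $\varphi$-stable and $G$-stable $S$-subalgebra on which $G$ acts through $\hat G$. For part (1): since $\gs\subset S\subset\R_{K_0}$ and $\gs\subset W(R)$ via $u\mapsto[\p]$, we get $\gs\subset\hR$; being the intersection of two subrings of $B^+_\cris$, $\hR$ is itself a subring, hence a $\gs$-subalgebra of $W(R)$; and $\varphi(\hR)\subseteq\varphi(\R_{K_0})\cap\varphi(W(R))\subseteq\R_{K_0}\cap W(R)=\hR$, so $\hR$ is $\varphi$-stable. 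For part (2): for each $g\in G$ we have $g(\R_{K_0})=\R_{K_0}$ and $g(W(R))=W(R)$, whence $g(\hR)=g(\R_{K_0})\cap g(W(R))=\hR$, so $\hR$ is $G$-stable; and since the $G$-action on the larger ring $\R_{K_0}$ already factors through $\hat G=\gal(K_{\infty,p^\infty}/K)$, its restriction to the subring $\hR$ does so a fortiori.

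The one point that genuinely requires care is the input I am quoting, namely that $\R_{K_0}$ is $G$-stable: one must check that for $x\in S$ the series $\tau(x)=\sum_{i\ge0}N^i(x)\gamma_i(t)$ actually converges in $\R_{K_0}$. This uses that $\gamma_i(t)$ and $t^{\{i\}}$ differ by a unit in $\Z_p$ together with the convergence estimates for such series, exactly as set up in \S6 of \cite{liu2}; once this is granted, the fact that the $G$-action on $\R_{K_0}$ (and hence on $\hR$) factors through $\hat G$ follows from $g(x)=x$ for $g\in G_\infty$, $x\in S$, combined with the decomposition $\hat G=G_0\rtimes H_K$ and the choice of topological generator $\tau$ of $G_0$. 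I do not expect any further obstacle: modulo the structure of $\R_{K_0}$, the lemma is a formal consequence of stability passing to an intersection of subrings.
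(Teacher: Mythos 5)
Your proposal is correct and matches the paper's reasoning: the paper also treats the lemma as an immediate consequence of the preceding paragraph, where $\R_{K_0}$ is shown to be a $\varphi$-stable, $G$-stable $S$-algebra (via the series $\tau(x)=\sum_{i\ge 0}N^i(x)\gamma_i(t)$, with the action factoring through $\hat G$), combined with $\hR=\R_{K_0}\cap W(R)$ and the standard $\varphi$- and $G$-stability of $W(R)$. Your extra remarks on $\gs\subset\hR$ and on convergence of the series are just making explicit what the paper leaves implicit.
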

\begin{remark} By Lemma 7.1.2 in \cite{liu2}, we may regard $R_{K_0}$ as a subring of $K_0[\![x, y]\!]$ via $u \mapsto x $ and $t \mapsto y$.
However, the structure of $\hR$ is much more complicated and so far we do not know how to describe it explicitly. See Example \ref{example}.
\end{remark}

Let $(\M , \varphi_\M )$ be a Kisin module of height $r$ and $\hat
\M := \hR \otimes_{\varphi,\gs} \M$. Then we can naturally extend
$\varphi$ from $\M$ to $\hat \M$ by
$$\varphi_{\hat \M} (a \otimes m) = \varphi_{\hR}(a) \otimes \varphi_\M (m), \ \ \ \forall a \in \hR,\ \forall m \in \M.  $$
\begin{definition}\label{definition} A $(\varphi, \hat G)$-module (of height $r$) is a triple $(\M , \varphi, \hat G)$ where
\begin{enumerate} \item $(\M, \varphi_\M)$ is a Kisin module (of height $r$).
\item $\hat G$ is a $\hR$-semi-linear $\hat G$-action on $\hat \M:=\hR \otimes_{\varphi, \gs} \M$.
\item $\hat G$ commutes with $\varphi_{\hM}$ on $\hM$, i.e., for any $g \in \hat G$, $g \varphi_{\hM} = \varphi_{\hM} g$.
\item Regard $\M$ as an $\varphi(\gs)$-submodule in $ \hM $,  then $\M \subset \hM ^{H_K}$.
\end{enumerate}
\end{definition}

 A morphism between two $(\varphi, \hat G)$-modules is a morphism of Kisin modules and commutes with $\hat G$-action  on $\hM$'s.
  We denote by $\t{Mod}^{r , \hat G}_{/\gs}$ the category of $(\varphi, \hat G)$-modules of height $r$.

\subsection{The main theorem} Let $\hM=(\M, \varphi, \hat G)$ be a $(\varphi, \hat G)$-module. We can associate a $\Z_p[G]$-module:
\begin{equation}\label{hatT}
\hat T (\hM) := \t{Hom}_{\hR, \varphi}(\hR \otimes_{\varphi, \gs}
\M, W(R)),
\end{equation}
where $G$-acts on $\hat T(\hM)$ via $g (f)(x) = g (f(g^{-1}(x)))$
for any $g \in G$ and $f \in \hat T(\hM)$. Now we can state our main
theorem:
\begin{theorem}\label{main}  \begin{enumerate}\item  Let $\hM:=(\M, \varphi, \hat G) $ be a $(\varphi, \hat G)$-module.
There is a natural isomorphism of $\Z_p[G_\infty]$-modules
\begin{equation}\label{isom}
\theta: T_\gs(\M)= \t{Hom}_{\gs ,\varphi} (\M,\gs^\ur)
\overset{\sim}{\longrightarrow} \hat T (\hM)= \t{Hom}_{\hR,
\varphi}(\hR \otimes_{\varphi, \gs}  \M, W(R)).
\end{equation}
\item $\hat T$ induces an anti-equivalence between the category of $(\varphi, \hat G)$-modules of height $r$ and the category
of $G$-stable $\Z_p$-lattices in semi-stable representations with Hodge-Tate weights in $\{0, \dots, r\}$.
\end{enumerate}
\end{theorem}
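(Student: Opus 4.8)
The plan is to prove both statements by leveraging Kisin's Theorem \ref{kisin} together with a careful analysis of the Galois descent along the ring extension $\gs^\ur \hookrightarrow W(R)$ (or rather through $\gs^\ur \otimes_\gs \hR$). First I would establish part (1). Fix a $(\varphi, \hat G)$-module $\hM = (\M, \varphi, \hat G)$. Starting from $f \in T_\gs(\M) = \Hom_{\gs,\varphi}(\M, \gs^\ur)$, base change along $\varphi: \gs \to \hR$ (noting $\gs^\ur$ receives a compatible map, since $\gs^\ur \subset W(R)$ and $\hR \subset W(R)$, and $\varphi(\gs^\ur) \subset \gs^\ur$) to obtain a $\varphi$-equivariant $\hR$-linear map $\hR \otimes_{\varphi,\gs} \M \to W(R)$; this defines the map $\theta$ in \eqref{isom}. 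To see it is an isomorphism of $\Z_p$-modules, I would compare $\Z_p$-ranks: by the cited Corollary (2.1.4) of \cite{kisin2} the source has rank $\operatorname{rank}_\gs \M = d$, and for the target one checks that $\hat T(\hM)$ is finite free of rank $d$ by a standard argument (reduce mod $p$, use that $\hR/p\hR$ and the relevant Frobenius-fixed-point functors behave like the $\gs/p$ case, or invert $t$ and compare with $B^+_{\cris}$-period rings). The key point is injectivity and surjectivity of $\theta$: injectivity is formal from faithful flatness-type arguments on $\varphi^*\M$, while surjectivity requires showing every $\varphi$-equivariant map out of $\hR \otimes_{\varphi,\gs}\M$ actually lands in $\gs^\ur$ after restricting to $\M = 1 \otimes \M$ — this follows because the image of $\M$ under such a map lies in $W(R)$ and is killed by the height condition in a way that forces it into $(\gs^\ur)$, exactly as in Kisin's original descent. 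Finally, $G_\infty$-equivariance of $\theta$ is immediate since $G_\infty$ fixes $\gs$ hence acts compatibly on both sides through its action on $\gs^\ur \subset W(R)$.

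For part (2), I would proceed in both directions. \emph{From $(\varphi,\hat G)$-modules to lattices:} given $\hM$, part (1) shows $\hat T(\hM) \cong T_\gs(\M)$ as $\Z_p[G_\infty]$-modules, so as a $\Z_p$-module it is finite free; the $\hat G$-action on $\hM$ equips $\hat T(\hM)$ with a $G$-action (factoring through how $G$ acts on $W(R)$ and on $\hM$) extending the $G_\infty$-action. I must then show $V := \hat T(\hM)[1/p]$ is semi-stable with Hodge-Tate weights in $\{0,\dots,r\}$. The strategy is to recover Breuil's strongly divisible module: set $\D := S_{K_0} \otimes_{\varphi,\gs} \M[1/p]$ with its induced $\varphi$ and with $N$ defined using the $\hat G$-action — concretely, the operator $N$ on $\D$ should be extracted by differentiating the $\tau$-action, i.e. via the formula $\tau(x) = \sum N^i(x)\gamma_i(t)$ that already appears in the text for $\hR$; one checks $N\varphi = p\varphi N$ and that the filtration (defined à la Kisin from $\varphi^*\M \subset \M$) makes $\D$ an admissible filtered $(\varphi,N)$-module. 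Then invoke Breuil's/Liu's theory (\cite{b6}, \cite{liu3}) identifying such strongly divisible modules with $G$-stable lattices in semi-stable representations, and match the Galois representations via compatibility of period maps into $\ast$ / $B^+_{\st}$. \emph{From lattices to $(\varphi,\hat G)$-modules:} given $T \subset V$ semi-stable with HT weights in $\{0,\dots,r\}$, Theorem \ref{kisin}(2) and the subsequent remark produce a Kisin module $\M \in \sfr$ with $T|_{G_\infty} \cong T_\gs(\M)$; I then need to construct the $\hat G$-action on $\hM = \hR\otimes_{\varphi,\gs}\M$. This is done by transporting the $G$-action on $T$ through the isomorphism $\hat T(\hM) \cong T$ of part (1) — i.e. $\hat G$ acts on $\hM = \Hom_{\Z_p}(\hat T(\hM), W(R))^{\text{appropriate}}$-dually — and checking conditions (2)–(4) of Definition \ref{definition}: semi-linearity and commutation with $\varphi$ are formal, and condition (4), $\M \subset \hM^{H_K}$, follows because $H_K$ fixes $K_\infty$ hence acts trivially on the $G_\infty$-part which is where $\M$ lives. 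Functoriality and full faithfulness then reduce to full faithfulness of $T_\gs$ (Theorem \ref{kisin}(1)) plus the observation that a morphism of lattices is automatically $G$-equivariant once it is $G_\infty$-equivariant and compatible with the extra structure.

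The main obstacle I anticipate is the construction and well-definedness of the monodromy operator $N$ on $\D$ from the $\hat G$-action, and verifying that the resulting filtered $(\varphi,N)$-module is \emph{admissible} — equivalently, that the representation attached to $\hM$ is genuinely semi-stable and not merely de Rham or potentially semi-stable. The subtlety is that the $\tau$-action on $\hM$ a priori only lives on $\hR \otimes_{\varphi,\gs}\M$, which is a proper and rather opaque subring situation (cf. Example \ref{example}); one must show the formal derivative $N = \left.\frac{d}{dt}\right|_{t=0}$ of $g \mapsto g$ along $G_0$ converges and descends to $S_{K_0}\otimes_{\varphi,\gs}\M$, using the structure $\R_{K_0} \subset K_0[\![x,y]\!]$ from Lemma 7.1.2 of \cite{liu2} to control $t$-adic convergence. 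Once $N$ is in hand, admissibility can be checked by a weak-admissibility (Newton-above-Hodge) computation, or more cleanly by directly exhibiting the comparison isomorphism $\hat T(\hM)\otimes_{\Z_p} B^+_{\st} \cong \D \otimes_{S_{K_0}} B^+_{\st}$ and invoking that such a comparison forces semi-stability. A secondary technical point is checking that the two a priori different $G$-actions on $\hat T(\hM)$ — the one from Definition \ref{definition}/\eqref{hatT} and the one coming via Breuil's theory — coincide, which should follow from uniqueness of the $G_\infty$-action (Theorem \ref{kisin}(1)) together with density of $G_\infty$-related data, i.e. the fact that a continuous $G$-action is determined by its restriction to $G_\infty$ once it is known to extend.
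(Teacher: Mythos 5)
Your treatment of part (1) is essentially the paper's argument (define $\theta(f)(a\otimes x)=a\varphi(f(x))$, get injectivity from injectivity of $\varphi$ on $\gs^\ur\to W(R)$, and surjectivity by restricting $h\in\hat T(\hM)$ to $\M$, untwisting by $\varphi$, and applying Fontaine's Proposition B.1.8.3 to the finite-type $\varphi$-stable submodule of $W(R)$ of height $r$); the rank-counting detour is unnecessary. The serious gap is in the essential surjectivity half of part (2). Transporting the $G$-action on $T$ ``dually'' only gives, via the $G$-compatible comparison $\iota_\gs\otimes_{\gs^\ur,\varphi}B^+_\cris$ (Theorem 5.4.2 of \cite{liu2}), a $G$-action on $B^+_\cris\otimes_{\varphi,\gs}\M$; nothing formal guarantees that this action preserves $\hR\otimes_{\varphi,\gs}\M$, and this is exactly the difficulty the paper isolates in the remark following Proposition \ref{comphat}. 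The real content of essential surjectivity is Proposition \ref{stable}, $G(\M)\subset\hR\otimes_{\varphi,\gs}\M$, proved by combining the explicit action formula \eqref{action} (which puts $G(\M)$ inside $\R_{K_0}\otimes_{\varphi,\gs}\M$) with Lemma \ref{key}: if $a\in B^+_\cris$ and $(\varphi(\gt))^r a\in W(R)$ then $a\in W(R)$, which rests on showing that $(\varphi(\gt))^r$ generates Fontaine's ideal $I^{[r]}W(R)$ via the computation $v_R(\tilde\gt)=\frac{1}{p-1}$. Your proposal contains no substitute for this step; the assertion that ``semi-linearity and commutation with $\varphi$ are formal'' presupposes that the transported action lands in $\hM$ at all, which is the point at issue.

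There is a second problem in the direction ``from $(\varphi,\hat G)$-modules to lattices'': you propose to conclude by invoking the strongly divisible lattice theory of \cite{b6} and \cite{liu3}, but that equivalence is only available under a bound on the Hodge-Tate weights (roughly $r\le p-2$), i.e.\ it carries exactly the restrictions Theorem \ref{main} is designed to remove, and it also presupposes the admissibility verification you leave open. The paper avoids both issues by following \cite{liu2}, \S 7: no monodromy operator is constructed on all of $\D$ and no weak admissibility is checked. Instead one reduces modulo $I=uK_0[\![u]\!]\cap S_{K_0}$ to the finite-dimensional $K_0$-space $D=\D/I\D$, uses the $\varphi$-equivariant section $D\inj\D$ and the shape of $\R_{K_0}$ to see $\hat G(D)\subset K_0[t]\otimes_{K_0}D$, extracts $N$ on $D$ from the $\tau$-action, and then exhibits $\bar D\subset(\hat T^\v(\hM)\otimes_{\Z_p}B^+_\st)^G$ of $K_0$-dimension equal to $\t{rank}_{\Z_p}\hat T(\hM)$, which yields semi-stability directly. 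Your alternative suggestion of ``directly exhibiting the comparison isomorphism'' is the right idea, but it is precisely the argument you have not supplied, and working with $D$ rather than $\D$ dissolves the convergence worry you raise about defining $N$.
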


\section{The Proof of the Main Theorem}
\subsection{The connection to Kisin's theory} We first prove Theorem \ref{main} (1) and full faithfulness of $\hat T$ in this subsection.

Let $(\M, \varphi, \hat G)$ be a $(\varphi, \hat G)$-module and $\hM
:= \hR \otimes_{\varphi, \gs} \M$. As in Definition
\ref{definition}, we regard $\M$ as a $\varphi(\gs)$-submodule of
$\hM$. Then for any $f \in T_\gs(\M)$, define $\theta(f)\in
\t{Hom}_{\hR} ( \hM, W(R))$ by
$$ \theta(f)(a \otimes x): = a \varphi(f(x)), \ \  \forall a \in  \hR , \ \forall x \in \M. $$
It is routine to check that $\theta (f)$ is well-defined and preserves Frobenius. Therefore, $\theta : T_\gs(\M) \to \hat T(\hM)$ is a well-defined. Now we reduce the proof of Theorem \ref{main} (1) to the following


\begin{lemma} \label{compatibe} $\theta: T_\gs(\M) \to \hat T(\hat \M)$ is an isomorphism of $\Z_p[G_\infty]$-modules.
\end{lemma}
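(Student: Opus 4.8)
The plan is to show that the natural map $\theta : T_\gs(\M) \to \hat T(\hM)$ is an isomorphism of $\Z_p[G_\infty]$-modules by exhibiting it as a base change along the (faithfully flat, or at least injective with controlled cokernel) inclusion of rings, and then checking the $G_\infty$-equivariance on the nose. The first observation is that $\hat T(\hM) = \t{Hom}_{\hR, \varphi}(\hR \otimes_{\varphi, \gs} \M, W(R))$, and by the adjunction for the base change $\varphi: \gs \to \hR$ (or rather $\gs \to \hR$ composed with $\varphi$) together with the universal property of $\hR \otimes_{\varphi, \gs} \M$, a $\varphi$-compatible $\hR$-linear map $\hR \otimes_{\varphi, \gs} \M \to W(R)$ is the same datum as a $\varphi$-compatible $\gs$-linear map $\M \to W(R)$. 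So the content of the lemma is really that
\begin{equation*}
\t{Hom}_{\gs, \varphi}(\M, \gs^\ur) \overset{\sim}{\longrightarrow} \t{Hom}_{\gs, \varphi}(\M, W(R)),
\end{equation*}
i.e.\ that every $\varphi$-equivariant map $\M \to W(R)$ automatically lands in $\gs^\ur = \widehat{\O^\ur} \cap W(R)$, and conversely. The inclusion $\gs^\ur \subset W(R)$ gives one direction, so the crux is surjectivity: to show an arbitrary $\varphi$-equivariant $f: \M \to W(R)$ factors through $\gs^\ur$.

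The key step for surjectivity is a standard argument in Kisin's theory. Pick a $\gs$-basis $e_1, \dots, e_d$ of $\M$ and let $A \in M_d(\gs)$ be the matrix of $\varphi_\M$; since $\M$ has height $r$, there is $B \in M_d(\gs)$ with $AB = BA = E(u)^r$. A map $f$ is determined by the column vector $x = (f(e_i)) \in W(R)^d$ subject to $x = A \varphi(x)$ (Frobenius-equivariance; here $\varphi$ on $W(R)$), i.e.\ $\varphi(x) = E(u)^{-r} B x$. One then invokes that $\O_\E \otimes_\gs \M$ is an étale $\varphi$-module over $\O_\E$, hence (by the equivalence of categories of Fontaine, the $\varphi$-module analogue over $\O_\E \subset \O_\E^\ur \subset \widehat{\O^\ur}$) the solutions of $x = A\varphi(x)$ in $W(\Fr R)$ already lie in $\widehat{\O^\ur} = \O_{\widehat{\E^\ur}}$; intersecting with $W(R)$ puts them in $\gs^\ur$. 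This is exactly the computation behind "$T_\gs(\M)$ is finite free of the right rank" cited from Corollary (2.1.4) in \cite{kisin2}, and I would either quote that corollary directly or reproduce the one-paragraph matrix argument: reduce mod $p$ to get solutions over the separable closure of $k(\!(u)\!)$, then lift $p$-adically using that $E(u)$ is a unit in $\O_\E$ and $\varphi$ is topologically nilpotent enough on the relevant completed rings.

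Finally one checks $G_\infty$-equivariance of $\theta$. Here the point is that $G_\infty$ fixes $\gs \subset W(R)$ pointwise, acts on $\gs^\ur$ and on $W(R)$, and fixes $\varphi$; on the source $g \cdot f = g \circ f$ (post-composition, since $\M$ has trivial action), and on the target $(g \cdot \theta(f))(a \otimes x) = g(\theta(f)(g^{-1}(a \otimes x))) = g(g^{-1}(a) \varphi(f(x)))$ because $g^{-1}$ fixes $x \in \M$ and acts $\hR$-semilinearly; this equals $a \cdot g(\varphi(f(x))) = a \cdot \varphi(g(f(x))) = \theta(g\circ f)(a \otimes x)$, using that $\varphi$ and the $G_\infty$-action commute on $W(R)$. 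So $\theta$ intertwines the two actions, and combined with the bijectivity above it is the desired isomorphism of $\Z_p[G_\infty]$-modules.

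The main obstacle I anticipate is the surjectivity of $\theta$, i.e.\ controlling that $\varphi$-equivariant maps into $W(R)$ cannot "escape" $\gs^\ur$: making precise that a solution of $\varphi(x) = E(u)^{-r}Bx$ in $W(\Fr R)$ which happens to lie in $W(R)$ must in fact lie in $\widehat{\O^\ur} \cap W(R) = \gs^\ur$. This rests on the fact that $\widehat{\O^\ur}$ is exactly the subring of $W(\Fr R)[1/p]$ on which $\varphi$ is "étale-compatibly" well-behaved relative to $\O_\E$, together with $p$-adic approximation; it is essentially contained in Kisin's Corollary (2.1.4), so the cleanest route is to cite it and spend the remaining effort only on the adjunction identification and the $G_\infty$-equivariance bookkeeping, which are formal.
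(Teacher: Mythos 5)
Your proposal is correct and follows essentially the same route as the paper: both reduce the bijectivity to the statement that every $\varphi$-equivariant map $\M \to W(R)$ automatically lands in $\gs^\ur = \widehat{\O^{\ur}}\cap W(R)$, and then verify $G_\infty$-equivariance by the same direct computation (using that $G_\infty$ fixes $\M$ and commutes with $\varphi$ on $W(R)$). The only differences are cosmetic: the paper makes the untwisting step explicit (using $W(R)=\varphi(W(R))$ to pass from the $\varphi(\gs)$-semilinear restriction $h|_{\M}$ to an honest $\gs$-linear $\mathfrak f$ with $\varphi(\mathfrak f)=h|_\M$), and for the crux it cites Fontaine's Proposition B 1.8.3 applied to the finite-type $\varphi$-stable submodule $\mathfrak f(\M)\subset W(R)$ of height $r$, whereas you invoke Kisin's Corollary (2.1.4) or the \'etale $\varphi$-module d\'evissage over $\O_\E$ --- the same underlying fact.
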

\begin{proof} Since $\varphi: \gs^\ur \to W(R) $ is injective, $\theta$ is obviously an injection. To see that $\theta$ is surjective,
for any $h \in \hat T(\hM)$, consider $f:= h|_{\M}$. Since $f$ is a
$\varphi(\gs)$-linear morphism from $\M$ to $W(R)=\varphi(W(R))$.
There exists an $\mathfrak f \in  \t{Hom}_{\gs}(\M, W(R))$ such that
$\varphi(\mathfrak f) = f$. Obviously, $\theta (\mathfrak f) = h$
and $\mathfrak f$ preserves Frobenius. Now we have $\f \in
\t{Hom}_{\gs, \varphi}(\M, W(R))$. It suffices to show that $\f \in
T_\gs(\M)= \t{Hom}_{\gs, \varphi}(\M, \gs ^\ur)$. Note that $f(\M)
\subset W(R)$ is an $\gs$-finite type $\varphi$-stable submodule  and
of  $E(u)$-height $r$. By \cite{fo4}, Proposition B 1.8.3, we have
$f(\M) \subset \gs ^\ur$. This complete the proof of the bijection
of $\theta$. Now it suffices to check that $\theta$ is compatible
with $G_\infty$-actions on the both sides. For any $g \in G_\infty$,
$a \in \hR$, $x \in \M$ and $f \in T_\gs(\M)$,  $g(\theta(f))(a
\otimes x)= g (\theta(f)(g^{-1}(a\otimes x)))$. Note that $G_\infty$
acts on $\M$ trivially,  we have
$$g (\theta(f)(g^{-1}(a\otimes x)))= g (\theta(f)(g^{-1} (a) \otimes x))= a \otimes g(\varphi(f(x))) =
 \theta (g(f))(a \otimes x).  $$
That is, $g(\theta (f))= \theta (g(f))$.
\end{proof}

Now we need some preparations to show that $\hat T(\hat \M)$ is
semi-stable. Let $T$ be a finite free $\Z_p$-representation of $G$
or $G_\infty$, we denote by $T^\v$ the $\Z_p$-dual of $T$. It will
be useful to recall the following technical results from
\cite{liu2}, \S3.2: let $\M$ be a Kisin module of height $r$, using
the definition of $T_\gs(\M)$, we can show (c.f.  \cite{liu2},
Proposition 3.2.1) there exists an $\gs^\ur$-linear,
$G_\infty$-compatible morphism\footnote{Here we use a slightly
different notations from those in \cite{liu2}.}
$$\iota_\gs: \gs^\ur\otimes_\gs \M \to T^\v _\gs(\M) \otimes _{\Z_p}\gs^ \ur .$$
Select a $\gt \in \gs^\ur$ such that $\varphi(\gt)= c_0^{-1}E(u)
\gt$ where $c_0$ is the constant term of $E(u)$. Such $\gt$ is
unique up to  units of $\Z_p$, see Example 2.3.5 in \cite{liu2} for
details.
\begin{lemma} \label{denominator} $\iota_\gs$ is an injection. If we regard
 $\gs^\ur\otimes_\gs \M$ as a submodule of $ T^\v _\gs (\M) \otimes _{\Z_p}\gs^ \ur$ via $\iota_\gs $. Then
$\gt^r (T^\v _\gs(\M) \otimes _{\Z_p}\gs^ \ur) \subset \gs^\ur\otimes_\gs \M.$
 \end{lemma}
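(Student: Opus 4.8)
The plan is to reduce the statement to the analogous fact over the discretely-valued coefficient ring $\O_\E$ and its maximal unramified extension $\O_{\E}^{\ur}$, where the module $\M$ becomes étale and the geometry is much cleaner, and then to descend back to $\gs^{\ur}$ using the fact that $\gs^{\ur} = \widehat{\O^{\ur}} \cap W(R)$ together with the finiteness of $\M$. First I would recall, as in \cite{liu2}, the construction of $\iota_\gs$: dualising the evaluation pairing $\M \times T_\gs(\M) \to \gs^\ur$ and extending $\gs^{\ur}$-linearly produces a map $\gs^{\ur} \otimes_\gs \M \to T^\vee_\gs(\M) \otimes_{\Z_p} \gs^{\ur}$, which is automatically $\varphi$-equivariant and $G_\infty$-equivariant. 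Injectivity: base change along $\gs^{\ur} \to \O_{\E}^{\ur}$ (or rather its $p$-adic completion) turns $\iota_\gs$ into the comparison isomorphism for étale $\varphi$-modules, which is an isomorphism after inverting nothing on the $\O_\E$ side since both sides become free of the same rank over the field $\widehat{\E^{\ur}}$ and the map is known to be an isomorphism there (this is the classical étale $\varphi$-module dictionary, cf.\ Fontaine). Since $\gs^{\ur} \otimes_\gs \M$ is $u$-torsion free (it is free over $\gs^{\ur}$, which is a domain), injectivity of $\iota_\gs$ follows from injectivity after inverting $u$ and completing.

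For the cokernel bound I would argue as follows. Set $\CM := \gs^{\ur} \otimes_\gs \M$ and $\CN := T^\vee_\gs(\M) \otimes_{\Z_p} \gs^{\ur}$, viewed via $\iota_\gs$ as $\gs^{\ur}$-lattices inside the common $\widehat{\E^{\ur}}$-vector space $\CM[1/p] \otimes = \CN[1/p]\otimes$. Both are $\varphi$-stable, and on $\CN$ the Frobenius is the ``unit'' one coming from the Galois representation tensored with $\gs^{\ur}$, i.e.\ $\varphi^*\CN \xrightarrow{\sim} \CN$; on $\CM$, the cokernel of $\varphi^*\CM \to \CM$ is killed by $E(u)^r$ by the height-$r$ hypothesis on $\M$. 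Now compare $\varphi^*$ on both lattices: since $\iota_\gs$ is $\varphi$-equivariant, $\varphi$ carries $\CM$ into $\CN$ and we can measure the ``defect'' by how far $\CN$ sits from $\CM$. The key computation is that $\varphi(\gt) = c_0^{-1} E(u)\gt$ forces $\gt$ to have $u$-adic valuation exactly $\frac{1}{p-1}\sum_{i\ge 0} p^{-i}\cdot(\text{something})$ — more precisely $v_u(\gt) = \frac{p}{(p-1)e}\cdot e = \frac{p}{p-1}$ after normalising, so that $\gt$ "is" a fixed multiple of $E(u)$ in the relevant sense — and iterating the height-$r$ relation $\varphi^n$ times shows that $E(u)\varphi(E(u))\cdots$ divides the relevant transition map, whose total contribution is absorbed by $\gt^r$. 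Concretely: one shows $\CN \subseteq \gt^{-r}\CM$ by checking, at the level of $\varphi^*$, that $\varphi^* \CN = \CN$ while $\varphi^*\CM \supseteq E(u)^r \CM$ implies, after the standard "dévissage by $\varphi$" (write any element of $\CN$ as a $\varphi$-limit and push the $E(u)^r$ through using $\varphi(\gt)/\gt = c_0^{-1}E(u)$), that $\gt^r\CN \subseteq \CM$.

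The main obstacle I anticipate is the last step — making precise the claim that the $E(u)^r$-torsion in the cokernel of $\varphi^*$ on $\M$, when propagated through the étale comparison $\iota_\gs$, accumulates to exactly $\gt^r$ and no worse. This requires: (i) knowing $v_u(\gt)$ precisely and that $\gt$ is a unit away from $u=0$ in $\gs^{\ur}$, so that division by $\gt^r$ is meaningful inside $\gs^{\ur}[1/u]$ but the result lands back in $\gs^{\ur}$; (ii) a clean induction on the $p$-adic "depth" — since $\CN$ is $\varphi$-generated over small pieces, one reduces to $\CN/p\CN$ where $\gs^{\ur}/p = \O^{\ur}/p$ is a field extension of $k(\!(u)\!)$ and the relation $\varphi(\gt) = c_0^{-1}E(u)\gt = (\text{unit})\cdot u^{e}\gt$ mod $p$ makes the valuation bookkeeping exact; and (iii) checking that the $p$-adic completion does not introduce extra denominators, which follows because $\gt^r \CN$ and $\CM$ are both $p$-adically complete and separated and agree after inverting $p$ up to the computed bound. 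I expect parts (i) and (iii) to be routine given Example 2.3.5 of \cite{liu2}, and the genuine content to be the mod-$p$ valuation estimate in (ii), which is where the hypothesis "height $r$" is used in full force.
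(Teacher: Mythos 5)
The paper itself offers no argument for Lemma \ref{denominator} beyond the citation of Theorem 3.2.2 in \cite{liu2}, so your proposal has to stand on its own, and its first half does: injectivity via base change to $\widehat{\O^{\ur}}$, where $\iota_\gs$ becomes Fontaine's comparison isomorphism for the \'etale $\varphi$-module $\O_\E\otimes_\gs\M$, together with freeness of $\gs^\ur\otimes_\gs\M$ over the domain $\gs^\ur$, is the standard and correct argument. The problem is the second half, which is the entire content of the lemma. Your mechanism for the bound --- ``write any element of $\CN$ as a $\varphi$-limit and push the $E(u)^r$ through'' --- is not an argument: iterating the $\varphi$-equivariance naively produces denominators of the shape $\prod_{n\geq 1}\varphi^n(E(u))$, a product that does not converge in $\gs^\ur$ or $W(R)$ (it only makes sense in $B^+_\cris$, cf.\ Example \ref{example}), so nothing in the sketch caps the denominator at $\gt^r$. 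The supporting valuation claims are also off: it is $\varphi(\gt)$, not $\gt$, whose reduction has $v_R=p/(p-1)$; one may take $\tilde\gt=\p^{e/(p-1)}$, of valuation $1/(p-1)$, and $\gt$ is \emph{not} ``a fixed multiple of $E(u)$'' in any sense --- rather $E(u)$ divides $\varphi(\gt)$. Since you explicitly defer ``the mod-$p$ valuation estimate in (ii)'' as the genuine content, the lemma's only nontrivial assertion is left unestablished.

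To make your step (ii) precise: choose bases, let $C$ be the matrix of $\varphi$ on $\M$ and $X\in M_d(\gs^\ur)$ the matrix of $\iota_\gs$; $\varphi$-equivariance gives $\varphi(X)=XC$, and height $r$ gives $D$ with $CD=DC=E(u)^rI_d$. Over $\widehat{\O^{\ur}}$ the map is invertible, and $Y:=X^{-1}\in M_d(\widehat{\O^{\ur}})$ satisfies $\varphi(Y)=E(u)^{-r}DY$. Reducing mod $p$ and letting $m$ be the minimal valuation of the entries of $\bar Y$ in $\O^{\ur}/p\subset \Fr R$, this relation gives $pm\geq m-r$ (here $v_R(E(u)\bmod p)=1$), i.e.\ $m\geq -r/(p-1)=-v_R(\tilde\gt^{\,r})$; this one-line fixed-point inequality is exactly where height $r$ and $\varphi(\gt)=c_0^{-1}E(u)\gt$ produce the sharp exponent, and it is absent from your sketch. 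Even granting it, your point (iii) is not ``routine'': the mod-$p$ estimate only controls the first Teichm\"uller coordinate, and upgrading it to the integral statement $\gt^rY\in M_d(\gs^\ur)$ (equivalently $\gt^rY\in M_d(W(R))$, since $\gs^\ur=\widehat{\O^{\ur}}\cap W(R)$) requires a further argument --- in \cite{liu2} this is done by d\'evissage through the torsion theory; alternatively one can obtain the integral bound by the classical duality trick, pairing $\M$ against its dual with values in the rank-one height-$r$ module $\gs\cdot\f$, $\varphi(\f)=(c_0^{-1}E(u))^r\f$, whose period is precisely $\gt^r$. So the proposal identifies the right objects and the right general strategy, but the quantitative heart of the lemma is missing.
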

\begin{proof} See Theorem 3.2.2 in \cite{liu2}.
\end{proof}
 Using the same idea as above,  we have a similar result for $\hM$
 \begin{prop}\label{comphat}
 $\hat T (\hM)$ induces a natural $W(R)$-linear, $G$-compatible morphism
\begin{equation}\label{iotahat }
\hat \iota: \  W(R)\otimes_{\hR} \hM  \longrightarrow \hat T^\v (\hat \M)
\otimes_{\Z_p} W(R),
\end{equation}
 where $\hM = \hR \otimes_{\varphi, \gs} \M $. Moreover,  $\hat \iota = \iota_\gs  \otimes _{\gs^\ur, \varphi  } W(R)$ and $\hat \iota$ is an injection. If  we regard
 $W(R)\otimes_{\hR} \hM$ as a submodule of $ \hat T^\v (\hM) \otimes _{\Z_p}W(R)$ via $\hat \iota $. Then
$(\varphi(\gt))^r (\hat T^\v(\hM) \otimes _{\Z_p}W(R)) \subset W(R)\otimes_{\hR} \hM.$
 \end{prop}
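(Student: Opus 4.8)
The plan is to deduce the proposition from Lemma~\ref{denominator} by base change along $\varphi\colon \gs^\ur \to W(R)$. First, I recall that by construction $\hM = \hR \otimes_{\varphi,\gs}\M$, so that $W(R)\otimes_{\hR}\hM = W(R)\otimes_{\varphi,\gs}\M$. Similarly, by Lemma~\ref{compatibe} we have a natural $G_\infty$-isomorphism $\theta\colon T_\gs(\M)\xrightarrow{\sim}\hat T(\hM)$, hence also a dual isomorphism $\hat T^\v(\hM)\simeq T^\v_\gs(\M)$ as $\Z_p$-modules (and as $G_\infty$-modules). So the target $\hat T^\v(\hM)\otimes_{\Z_p}W(R)$ is canonically identified with $T^\v_\gs(\M)\otimes_{\Z_p}W(R)$. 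The idea is then simply to define $\hat\iota := \iota_\gs \otimes_{\gs^\ur,\varphi}W(R)$, using the embedding $\gs^\ur\hookrightarrow W(R)$ given by $\varphi$ (i.e. tensoring the map of Lemma~\ref{denominator} up along $\varphi$). This is manifestly $W(R)$-linear, and the source becomes $W(R)\otimes_{\varphi,\gs^\ur}(\gs^\ur\otimes_\gs\M) = W(R)\otimes_{\varphi,\gs}\M = W(R)\otimes_{\hR}\hM$ as needed.

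Next I would verify the two claimed compatibilities. For $G$-equivariance: $\iota_\gs$ is $G_\infty$-compatible by \cite{liu2}, and the $\hat G$-action on $\hM$ together with the $G$-action on $W(R)$ and on $\hat T(\hM)$ are set up precisely so that, after the identification $\hat T^\v(\hM)\simeq T^\v_\gs(\M)$, the base-changed map $\hat\iota$ intertwines the full $G$-action; concretely one checks this on simple tensors $a\otimes m$ with $a\in W(R)$, $m\in\M$, using that $G$ acts trivially on $\M\subset\hM^{H_K}$ modulo the twist coming from $\varphi$ and $\e(g)$, exactly as in the proof of Lemma~\ref{compatibe}. For injectivity and the denominator bound: $W(R)$ is flat (indeed faithfully flat) over $\gs^\ur$ via $\varphi$ — $\gs^\ur$ is a domain and $W(R)$ is $p$-torsion free with $\gs^\ur/p = \O^\ur/p\hookrightarrow W(R)/p = R$ — so tensoring the injection $\iota_\gs$ of Lemma~\ref{denominator} keeps it injective, and tensoring the inclusion $\gt^r\bigl(T^\v_\gs(\M)\otimes_{\Z_p}\gs^\ur\bigr)\subset \gs^\ur\otimes_\gs\M$ gives $(\varphi(\gt))^r\bigl(\hat T^\v(\hM)\otimes_{\Z_p}W(R)\bigr)\subset W(R)\otimes_{\hR}\hM$, since $\varphi$ sends $\gt$ to $\varphi(\gt)$ and $T^\v_\gs(\M)\otimes_{\Z_p}\gs^\ur \otimes_{\gs^\ur,\varphi}W(R) = \hat T^\v(\hM)\otimes_{\Z_p}W(R)$.

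The one genuinely delicate point — and the step I expect to be the main obstacle — is making precise that the map obtained by base change really is "the" natural map induced by $\hat T(\hM)$ in the same sense that $\iota_\gs$ is induced by $T_\gs(\M)$: that is, one wants $\hat\iota$ to be the canonical evaluation-type morphism $W(R)\otimes_{\hR}\hM \to \hat T^\v(\hM)\otimes_{\Z_p}W(R)$ attached to the perfect-up-to-$\varphi(\gt)^r$ pairing between $\hM$ and $\hat T(\hM)$, not merely some map that happens to have the right formal properties. I would establish this by tracing through the definition of $\iota_\gs$ in \cite{liu2}, §3.2: it is built from a $\Z_p$-basis of $T_\gs(\M)$ and the resulting dual basis, and $\theta$ carries such a basis of $T_\gs(\M)$ to one of $\hat T(\hM)$, so the explicit matrix description of $\iota_\gs$ base-changes along $\varphi$ to the explicit matrix description of $\hat\iota$. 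Once this identification is in hand, injectivity, the denominator estimate, and $G$-compatibility all follow formally as above, completing the proof.
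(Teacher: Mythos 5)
Your proposal follows essentially the same route as the paper, only in the reverse logical order: the paper first defines $\hat\iota$ intrinsically, by showing $\hat T(\hM)\simeq \t{Hom}_{W(R),\varphi}(W(R)\otimes_{\hR}\hM,W(R))$ as $G$-modules and taking the evaluation map $x\mapsto (f\mapsto f(x))$ --- which makes $W(R)$-linearity and $G$-compatibility formal from $g(f)(x)=g(f(g^{-1}x))$ --- and only then identifies this map with $\iota_\gs\otimes_{\gs^\ur,\varphi}W(R)$ by comparing the two constructions through the isomorphism $\theta$ of Lemma \ref{compatibe}; you instead define $\hat\iota$ as the base change and then argue it is the canonical evaluation map. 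The ``delicate point'' you isolate is exactly the content of the paper's comparison step, and your plan for it (trace the construction of $\iota_\gs$ and transport it via $\theta$, which sends $f$ to $a\otimes x\mapsto a\varphi(f(x))$) is the right one; note that once that identification is in place, full $G$-equivariance is formal, so your separate hands-on check on simple tensors is unnecessary (and as sketched it is a bit loose: only $H_K$, not $G$ or $G_\infty$ in the $\hR$-picture, fixes $\M$, and the genuine $G$-action on the target only exists after the identification with $\hat T^\v(\hM)$).

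The one step whose justification as written does not hold up is injectivity via flatness. The identification $\gs^\ur/p\gs^\ur=\O^\ur/p\O^\ur$ is false: $\O^\ur/p\O^\ur$ is the separable closure of $k(\!(u)\!)$, a field, whereas $\gs^\ur/p\gs^\ur$ injects into $R$ and contains $u$ but not $u^{-1}$; and flatness of $W(R)$ over the non-Noetherian ring $\gs^\ur$ (via $\varphi$) does not follow from $p$-torsion-freeness together with an injection modulo $p$. Fortunately injectivity really is an easy consequence of Lemma \ref{denominator} by a more elementary argument, which is presumably what the paper intends: source and target of $\iota_\gs$ are finite free of the same rank $d$, and Lemma \ref{denominator} provides a matrix $Y$ over $\gs^\ur$ with $XY=YX=\gt^r I_d$, where $X$ is a matrix of $\iota_\gs$; hence $\det X$ divides $\gt^{rd}$ and is nonzero. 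Applying $\varphi$ and extending scalars, the matrix $\varphi(X)$ of $\hat\iota$ has nonzero determinant in the domain $W(R)$ (recall $W(R)\subset W(\Fr R)$), so $\hat\iota$ is injective, and the relation $\varphi(X)\varphi(Y)=(\varphi(\gt))^r I_d$ gives precisely $(\varphi(\gt))^r\bigl(\hat T^\v(\hM)\otimes_{\Z_p}W(R)\bigr)\subset W(R)\otimes_{\hR}\hM$. With that repair your argument is complete and coincides with the paper's proof.
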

 \begin{proof}
 We use the same idea for the construction of  $\iota_\gs$  in Proposition 3.2.1 in \cite{liu2}. One first prove that
 $$\hat T (\hM) \simeq \t{Hom}_{W(R), \varphi} (W(R) \otimes_{\hR} \hM, W(R)) $$
 is an isomorphism of $G$-modules, where the $G$-action on the right side is given by $g(f)(\cdot)= g(f(g^{-1}(\cdot)))$,  for any
  $g \in G$ and $f \in \t{Hom}_{W(R), \varphi} (W(R) \otimes_{\hR} \hM,
  W(R))$. Then we have a map $$\hat \iota:\  W(R) \otimes_{\hR} \hM \to \t{Hom}_{\Z_p}(\hat T(\hM), W(R))=\hat T^\v(\hM)\otimes_{\Z_p}
  W(R)$$
  induced by  $x \mapsto (f\mapsto f(x), \  \forall f \in \hat
  T(\hM))$ for any $x \in \hM$. It is easy to check that $\hat
  \iota$ is compatible with $G$-actions on the both sides. By Lemma
  \ref{compatibe} and comparing the constructions of $\iota_\gs$ and $\hat \iota$,
   we see that $\hat \iota= \iota_\gs \otimes_{\gs^\ur, \varphi  }
   W(R)$. The remaining statements are then easy consequences of Lemma
   \ref{denominator}.
 \end{proof}
\begin{remark}Let $V$ be a representation of  $E(u)$-height $r$,  $T$  a $G$-stable $\Z_p$-lattice in
$V$, and $\M$ the Kisin module associated to $T|_{G_\infty}$. We can
always consider the injection $$\tilde \iota: = \iota_\gs \otimes_{
\gs^\ur, \varphi }W(R): \ W(R)\otimes_{\varphi, \gs} \M \inj
T^\v_\gs (\M)\otimes_{\Z_p}W(R).$$ There is a natural $G$-action on
the right side because $T$ is $G$-stable. In general, it is not
clear whether the left side is $G$-stable\footnote{Though it is $G_\infty$-stable.}, or equivalently,  whether the $ G$-orbit of
$\M$, $ G(\M)\subset W(R)\otimes_{\varphi, \gs}\M$. As we will
see soon, in the case of $(\varphi, \hat G)$-modules, we have $
G(\M)\subset \hR\otimes_{\varphi, \gs}\M \subset
W(R)\otimes_{\varphi, \gs}\M$. This is actually a key point to prove
that $\hat T(\hM) \otimes_{\Z_p}\Q_p$ is semi-stable.
\end{remark}
 Now we are ready to prove that $\hat T(\hM)
\otimes_{\Z_p}\Q_p$ is semi-stable.
 Tensoring $B^+_\cris$ on both sides of \eqref{iotahat },   noting that
 $$B^+_\cris \otimes_{W(R)}W(R) \otimes_{\hR}\hM= B^+_\cris\otimes_{\hR}\hM=B^+_{\cris}
 \otimes_{\R_{K_0}} \R_{K_0}\otimes_{\hR} \hM  $$ and  $ \R_{K_0}\otimes_{ \hR} \hM   \simeq \R_{K_0} \otimes_{\varphi, \gs} \M$, we have
\begin{equation}\label{comp1}\hat\iota \otimes_{W(R)} B^+_\cris:\  B^+_\cris \otimes_{\R_{K_0}}(\R_{K_0} \otimes_{\varphi,\gs}  \M
) \to \hat T^\v (\hM)\otimes_{\Z_p} B^+_{\cris}.
\end{equation}
 By a similar argument for $\iota_\gs$, we also have
\begin{equation}\label{comp2}\iota_\gs \otimes_{\gs^\ur, \varphi} B^+_\cris:\
B^+_\cris \otimes_{\R_{K_0}}(\R_{K_0} \otimes_{\varphi,\gs}  \M )
\to T^\v_\gs (\M)\otimes_{\Z_p} B^+_{\cris}.
\end{equation}
  Since
$\hat \iota = \iota_\gs  \otimes _{\gs^\ur, \varphi }W(R)$ by
Proposition \ref{comphat}, we have the following commutative diagram
to identify \eqref{comp1} with \eqref{comp2}:
$$\xymatrix{B^+_\cris \otimes_{\R_{K_0}}(\R_{K_0} \otimes_{\varphi,\gs}  \M )
 \ar[r]^-{\eqref{comp1}}\ar[d]^{\wr} & \hat T^\v (\hM)\otimes_{\Z_p} B^+_{\cris}\ar[d]^\wr\\
B^+_\cris \otimes_{\R_{K_0}}(\R_{K_0} \otimes_{\varphi,\gs}  \M )
\ar[r]^-{\eqref{comp2}}&T^\v_\gs (\M)\otimes_{\Z_p} B^+_{\cris} }
$$
Thus $\R_{K_0}\otimes_{\varphi, \gs} \M$ in \eqref{comp2} is
$G$-stable and has the same $\hat G$-action as that on
$\R_{K_0}\otimes_{\varphi, \gs} \M$ in \eqref{comp1}. Now the proof
of semi-stability of $\hat T (\hM)$ will be totally same as
\cite{liu2}, \S 7. For convenience of readers, we sketch the proof
here.

 \cite{liu2}, \S7 is also aiming to
prove that certain representation $V$ of  $E(u)$-height $r$ is
semi-stable with Hodge-Tate weights in $\{0, \dots, r\}$. Except
that we require that $V$ is of finite $E(u)$-height such that we can
establish \eqref{comp2}, the only other inputs that \S 7 need are
three conditions in the beginning of \S7.1 on the $\hat G$-action on
$\D \otimes_S \R_{K_0}\simeq \R_{K_0} \otimes_{\varphi, \gs}\M$,
where $\D := S_{K_0} \otimes_{\varphi, \gs}\M$. And these three
conditions are just conditions required in Definition
\ref{definition}. Thus the same proof follows. More precisely,
regarding $S_{K_0}$ as a subring of $K_0[\![u]\!]$, let $I= u
K_0[\![u]\!] \cap S_{K_0}$ and $D:= \D/I \D$. Then $D$ is a  finite
free $K_0$-module with a semi-linear Frobenius action. One can prove
there is a unique $\varphi$-equivariant section $D \inj \D$. So we
can regard $D$ as a $K_0$-submodule in $\D$. Since $D\inj \D$ is
$\varphi$-equivariant, the structure of $\R_{K_0}$ forces that $\hat
G(D) \subset K_0[t] \otimes_{K_0}D$. Now the fact that $\hat G$ acts
on $K_0[t] \otimes_{K_0}D$ and $H_K$ acts on $D$ trivially implies
that there exists a linear map $N: D \to D$ such that $\tau(x) =
\sum \limits_{n=0}^\infty \gamma_i(t) \otimes N^i(x) $ for any $x
\in D$. Now consider the $K_0$-vector space
$$\bar D:= \{\sum_{i=0}^ \infty \gamma_i (\mathfrak u) \otimes N^i(x) \in B^+_\st \otimes_S \D  | x \in D  \}$$
where $\mathfrak u = \log(u) \in B^+_\st$.
 We can
show that $\bar D \subset (B^+_\st \otimes_S \D)^G \subset (\hat
T^\v(\hM) \otimes_{\Z_p} B^+_\st)^G$. But $\t{dim}_{K_0}\bar D =
\t{dim}_{K_0}D = \t{Rank}_{\Z_p} \hat T(\hM)$. Therefore, $\hat
T(\hM)\otimes_{\Z_p}\Q_p$ is semi-stable and the functor $\hat T$ is
well-defined.

Now let us prove the full faithfulness of $\hat T$. Suppose that
$f: T'\to T $ is a morphism of $G$-stable $\Z_p$-lattices inside
semi-stable representations, and there exist $(\varphi, \hat
G)$-modules $\hM'$ and $\hM$ such that $\hat T (\hM')\simeq T'$ and
$\hat T(\hM)\simeq T$. Note that $T_\gs$ is fully faithful (Theorem
\ref{kisin}), there exists a morphism of Kisin modules $\mathfrak f
: \M \to \M'$ such that $T_\gs (\mathfrak f)= f|_{G_\infty}$, where
$\hM =  \hR \otimes_{\varphi, \gs} \M$ and $\hM' = \hR
\otimes_{\varphi, \gs}\M'$. By Lemma \ref{compatibe}, it suffices to
show that $\hat{\mathfrak f}:= \hR \otimes_{\varphi, \gs}\mathfrak f
$ is  $G$-equivariant. To see this, consider the following
commutative diagram induced by  $\hat \iota $ defined in
\eqref{iotahat }:
$$\xymatrix{ W(R) \otimes_{\hR}\hM\ar[d]^{W(R)\otimes_{ \hR}\hat \f} \ar[r]^-{\hat
\iota}&T^\v(\hM)\otimes_{\Z_p}W(R)\ar[d]^{T_\gs(\f)\otimes_{\Z_p}W(R)} \ar@{=}[r]&T^\v \otimes_{\Z_p}W(R)\ar[d]^{f\otimes_{\Z_p}W(R) }\\
W(R) \otimes_{\hR}\hM' \ar[r]^-{\hat
\iota'}&T^\v(\hM')\otimes_{\Z_p}W(R)\ar@{=}[r]&(T')^\v
\otimes_{\Z_p}W(R)}
$$
Note that $\hat \iota$ is injective by Proposition \ref{comphat}.
Since $f$ is $G$-equivariant, $\hat{\mathfrak f}$ is
$G$-equivariant.

\subsection{The essential surjectiveness of $\hat T$ }
Now assume $T$ is a $G$-stable $\Z_p$-lattice in a semis-stable
representation $V$ with Hodge-Tate weights in $\{0, \dots, r\}$. By
Theorem \ref{kisin}. There exists a Kisin module $\M$ such that
$T_\gs(\M) \simeq T|_{G_\infty}$. Theorem 5.4.2 in \cite{liu2}
showed that
$$\iota_\gs \otimes_{\gs^\ur,  \varphi} B^+_\cris:\
 {B^+_\cris}\otimes_{\varphi, \gs}\M \longrightarrow T^\v_\gs(\M) \otimes_{\Z_p}  B^+_\cris= T^\v \otimes_{\Z_p} B^+_\cris$$
 is compatible with $G$-action. More precisely, let $\D := S_{K_0}
\otimes_{\varphi, \gs}\M$ be the Breuil module\footnote{A Breuil
module is a finite free $S_{K_0}$-module with structures of
Frobenius, filtration and monodromy. By \cite{b2}, the category of
admissible Breuil modules is equivalent to the category of
semi-stable representations. Also see \S3.2 in \cite{liu3} for the
relation between Kisin modules and Breuil modules.} associated to
$V$ and $N$ be the monodromy operator on $\D$. Then $G$ acts on
$B^+_\cris \otimes_{S} \D= {B^+_\cris}\otimes_{\varphi, \gs}\M$ via
(c.f. (5.2.1) in \cite{liu2})
\begin{equation}\label{action}
g(a \otimes x)= \sum_{i=0}^\infty g(a)\gamma _i (-\log(\e(g)))
\otimes N^i (x) .
\end{equation}
We identify $\M$ as a $\varphi(\gs)$-submodule of
${B^+_\cris}\otimes_{\varphi, \gs}\M$ by
$$\M \simeq \gs \otimes _\gs \M \inj \gs \otimes _{\varphi, \gs} \M \inj {B^+_\cris}\otimes_{\varphi, \gs }\M.$$
Now we are interested in the orbit $G(\M)$ of $\M$ under $G$.
\begin{prop}\label{stable}
$G(\M)\subset \hR \otimes_{\varphi, \gs}\M$.
\end{prop}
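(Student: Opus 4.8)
The plan is to use the explicit description of the $G$-action on $B^+_\cris \otimes_{\varphi, \gs}\M$ given in \eqref{action} together with the fact that the $G$-action factors through $\hat G = G_0 \rtimes H_K$. Since $G_\infty$ acts trivially on $\M$ (indeed on $\gs \subset W(R)$) and $H_K \subset G_\infty$... wait, rather $H_K = \gal(K_{\infty,p^\infty}/K_\infty)$, so $H_K$-elements restricted from $G$ lie in $G_\infty$ and hence act trivially on $\M$ and on $\gs$; one checks from \eqref{action} that for $g \in H_K$ we have $\e(g) = g([\p])/[\p] = 1$, so $g$ acts trivially on all of $\M \subset B^+_\cris \otimes_{\varphi,\gs}\M$ as well. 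Therefore it suffices to understand $\tau(\M)$ for the fixed topological generator $\tau$ of $G_0 \simeq \Z_p(1)$, since the $\hat G$-orbit of $\M$ is generated (as a topological span, under the group action) by $H_K$ and $\tau$. More precisely, any $g \in \hat G$ can be written (after choosing coset representatives) in terms of $\tau$ and an element of $H_K$, and the $\hat R$-linearity of the desired containment $\hat R \otimes_{\varphi,\gs}\M$ reduces everything to showing $\tau(\M) \subset \hat R \otimes_{\varphi,\gs}\M$.

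The key computation is then the following. For $x \in \M$, formula \eqref{action} gives
\[
\tau(x) = \sum_{i=0}^\infty \gamma_i(-\log(\e(\tau))) \otimes N^i(x) = \sum_{i=0}^\infty \gamma_i(t) \otimes N^i(x),
\]
where $N$ is the monodromy operator on the Breuil module $\D = S_{K_0}\otimes_{\varphi,\gs}\M$. Since $t \in \acris$ and $\gamma_i(t) = t^i/i!$ (up to the divided-power bookkeeping $t^{\{i\}}$), each term $\gamma_i(t)\otimes N^i(x)$ lies in $\R_{K_0}\otimes_{\varphi,\gs}\M$ by the very definition of $\R_{K_0}$ as $\{\sum f_i t^{\{i\}} : f_i \in S_{K_0}, f_i \to 0\}$; and the sum converges there because $N$ is topologically nilpotent on $\D$ in the appropriate sense (the coefficients of $N^i(x)$ tend to zero $p$-adically — this is part of the structure of admissible Breuil modules, so $N^i(x)/i!$ behaves well), so $\tau(x) \in \R_{K_0}\otimes_{\varphi,\gs}\M$. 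So far this only shows $\tau(\M) \subset \R_{K_0}\otimes_{\varphi,\gs}\M$. To land inside $\hat R \otimes_{\varphi,\gs}\M = (\R_{K_0}\cap W(R))\otimes_{\varphi,\gs}\M$, I would argue as follows: $\tau(\M) \subset W(R)\otimes_{\varphi,\gs}\M$ because $\M \subset W(R)\otimes_{\varphi,\gs}\M$ (this is the embedding into $\hat T^\v_\gs(\M)\otimes_{\Z_p}W(R)$ of Proposition \ref{comphat}, or directly from $\gs \subset W(R)$) and $\tau$ preserves $W(R)$; combined with $\tau(\M) \subset \R_{K_0}\otimes_{\varphi,\gs}\M$ and a flatness/intersection argument — namely that $(\R_{K_0}\cap W(R))\otimes_{\varphi,\gs}\M = (\R_{K_0}\otimes_{\varphi,\gs}\M)\cap(W(R)\otimes_{\varphi,\gs}\M)$ inside $B^+_\cris\otimes_{\varphi,\gs}\M$, which holds because $\M$ is finite free over $\gs$ — we conclude $\tau(\M)\subset \hat R\otimes_{\varphi,\gs}\M$.

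I expect the main obstacle to be the convergence and the intersection claim, i.e.\ making precise why $\sum_i \gamma_i(t)\otimes N^i(x)$ genuinely converges in $\R_{K_0}\otimes_{\varphi,\gs}\M$ (this needs the monodromy operator on the Breuil module to have the right nilpotence/growth property, which is a known but slightly technical fact about admissible Breuil modules from \cite{liu2}, \S5, and is already implicitly used to make \eqref{action} well-defined), and why the intersection of $\R_{K_0}\otimes_{\varphi,\gs}\M$ with $W(R)\otimes_{\varphi,\gs}\M$ inside $B^+_\cris\otimes_{\varphi,\gs}\M$ equals $\hat R\otimes_{\varphi,\gs}\M$ — this is where finite freeness of $\M$ over $\gs$ (so that $-\otimes_{\varphi,\gs}\M$ commutes with the intersection $\R_{K_0}\cap W(R)=\hat R$, after picking a basis) does the job. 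Once these two points are settled, combining them with the reduction to $\tau$ and $H_K$ in the first paragraph completes the proof, and in fact this containment $G(\M)\subset \hat M = \hat R\otimes_{\varphi,\gs}\M$ is exactly the input needed to equip $\hat M$ with its $\hat G$-action and thereby prove essential surjectivity of $\hat T$.
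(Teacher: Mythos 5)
Your reduction to the single generator $\tau$ and the coefficient-wise intersection $(\R_{K_0}\cap W(R))\otimes_{\varphi,\gs}\M=(\R_{K_0}\otimes_{\varphi,\gs}\M)\cap(W(R)\otimes_{\varphi,\gs}\M)$ (using a basis of the finite free module $\M$) are both fine and match the paper's strategy, as does the use of \eqref{action} to get $\tau(\M)\subset\R_{K_0}\otimes_{\varphi,\gs}\M$. But the step where you claim $\tau(\M)\subset W(R)\otimes_{\varphi,\gs}\M$ ``because $\M\subset W(R)\otimes_{\varphi,\gs}\M$ and $\tau$ preserves $W(R)$'' is a genuine gap, and it is exactly the nontrivial point of the proposition. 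The $\tau$-action is not the naive action on coefficients of $W(R)\otimes_{\varphi,\gs}\M$: the submodule $\M$ is not $G$-stable, and by \eqref{action} $\tau(x)=\sum_i\gamma_i(t)\otimes N^i(x)$ involves the divided powers $\gamma_i(t)$, which lie in $\acris$ (indeed in $\R_{K_0}$) but not in $W(R)$, so term by term the sum leaves $W(R)\otimes_{\varphi,\gs}\M$. What is true for free is only $\tau(\M)\subset T^\v\otimes_{\Z_p}W(R)$ (since $T$ and $W(R)$ are $G$-stable), and $W(R)\otimes_{\varphi,\gs}\M$ is in general a strict sublattice of $T^\v\otimes_{\Z_p}W(R)$; the remark immediately preceding the proposition in the paper flags precisely this issue. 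So your argument assumes the conclusion at its key step.

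The paper closes this gap in two moves that are absent from your proposal. First, the denominator bound of Proposition \ref{comphat} (coming from Lemma \ref{denominator}) gives $(\varphi(\gt))^r\,(T^\v\otimes_{\Z_p}W(R))\subset W(R)\otimes_{\varphi,\gs}\M$, hence $(\varphi(\gt))^r\,\tau(\M)\subset W(R)\otimes_{\varphi,\gs}\M$. Second, one must be able to cancel the factor $(\varphi(\gt))^r$: this is Lemma \ref{key}, which shows that for $a\in B^+_\cris$ with $(\varphi(\gt))^r a\in W(R)$ one has $a\in W(R)$, via Fontaine's ideal $I^{[r]}W(R)$ and the computation $v_R(\widetilde{\varphi(\gt)})=\frac{p}{p-1}$ showing that $(\varphi(\gt))^r$ is a generator of $I^{[r]}W(R)$. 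Applied to the entries of the matrix of $\tau$ in a basis of $\M$ (which you already know lie in $\R_{K_0}\subset B^+_\cris$), this yields $\tau(\M)\subset W(R)\otimes_{\varphi,\gs}\M$ and then, by your intersection argument, $\tau(\M)\subset\hR\otimes_{\varphi,\gs}\M$. Without some substitute for these two steps (a divisibility bound plus an integrality/cancellation lemma in $W(R)$), the proposal does not prove the statement; the convergence discussion about $N$ being topologically nilpotent is not where the difficulty lies, since \eqref{action} is already known to be well defined.
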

\begin{proof}We have seen that $G(\M) \subset \R_{K_0} \otimes_{\varphi, \gs}\M$ by formula \eqref{action}.
Then it suffices to show that $\tau(\M) \subset
W(R)\otimes_{\varphi, \gs}\M$, where $\tau$ is the fixed generator
of $G_0$. Now consider the following commutative diagram
\begin{equation}\label{phi-block}
\begin{split} \xymatrix{\M \ar@{=}[d]\ar@{^{(}->}[r]^-{\varphi\otimes 1} & W(R) \otimes_{\varphi, \gs} \M
\ar@{_{(}->}[d] \ar[r] &
T^\v \otimes_{\Z_p} W(R)\ar@{^{(}->}[d] \\
\M \ar@{^{(}->}^-{\varphi\otimes 1}[r] & B^+_\cris
\otimes_{\varphi,\gs} \M \ar[r] & T^\v \otimes _{\Z_p} {B^+_\cris}}
\end{split}
\end{equation}
where the first row is obtained by $\iota_\gs \otimes _{\gs^\ur,
\varphi} W(R)$. Obviously, the right column is compatible with
$G$-action. By Proposition \ref{comphat},  we have
$$(\varphi(\gt))^r (\tau(\M)) \subset W(R)
\otimes_{\varphi,\gs} \M.$$ Now select a basis $e_1, \dots , e_d $
of $\M$ and write $\tau(e_1, \dots, e_d) = (e_1, \dots, e_d)A$ with
$A$ a $d\times d$-matrix. Let $a$ be a coefficient of $A$. It
suffices to show that $a \in W(R)$. Now we know that $a \in
\R_{K_0}$ and $(\varphi(\gt))^r a \in W(R)$. Then we may reduce the
proof to Lemma \ref{key} below.

\end{proof}

\begin{lemma}\label{key} Let $a \in B^+_\t{cris}$. If $(\varphi(\gt))^r a \in W(R)$ then $a \in W(R)$.
\end{lemma}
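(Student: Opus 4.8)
The plan is to analyze the element $\varphi(\gt)$ inside $\acris$ and show it behaves like a non-zero-divisor whose "denominators" are controlled, so that divisibility by $(\varphi(\gt))^r$ in $\acris$ already forces membership in $W(R)$. First I would recall that $\gt \in \gs^\ur$ satisfies $\varphi(\gt) = c_0^{-1}E(u)\gt$, so $\varphi(\gt)$ generates, up to a unit of $\gs^\ur \subset W(R)$, the same ideal as $E(u)$ does after one application of $\varphi$; more usefully, in $\acris$ the element $E([\upi])$ (equivalently $\varphi$ of a uniformizing parameter) differs from the standard generator $t' := $ a generator of $\mathrm{Ker}(\theta)$ only by a unit times $p$ (recall $E(u) \equiv $ (unit)$\cdot(u^e - \text{stuff})$ and $E([\upi]) \in \mathrm{Ker}(\theta)$, with $E([\upi])/p$ or $p/E([\upi])^?$ related to divided powers). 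The point is that $W(R)$ is $p$-adically separated and $\acris$ is obtained from $W(R)$ by adjoining divided powers $\gamma_n(\xi)$ of a generator $\xi$ of $\mathrm{Ker}(\theta)$, and one has the standard fact that $\acris \cap \mathrm{Frac}(W(R))$-type statements hold: specifically $\xi^p/p \in \acris$ and more generally $\acris = W(R)[\xi^n/n!]$.

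Concretely, the key steps I would carry out are: (1) Reduce, via $\varphi(\gt) = (\text{unit of } W(R))\cdot \varphi(E(u))\cdot(\text{stuff})$ — more precisely iterate $\varphi(\gt)=c_0^{-1}E(u)\gt$ to see $\gt$ itself is a unit times $\prod_{i\ge 1}\varphi^i(E(u)/c_0)$ so $\varphi(\gt)$ is a unit of $W(R)$ times $\prod_{i\ge 2}\varphi^i(\cdots)$ — hmm, this infinite product lives in $W(R)$ and is a non-zero-divisor, so WLOG it suffices to prove: if $\varphi(E(u))^r a \in W(R)$ with $a \in B^+_{\cris}$, then $a \in W(R)$. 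Actually the cleanest reduction: since $\gt \in \gs^\ur \subset W(R)$ is a non-zero-divisor and $\varphi(\gt)=c_0^{-1}E(u)\gt$, we have $(\varphi(\gt))^r a \in W(R) \iff E(u)^r \gt^r a \in W(R)$; and $\gt^r a \in B^+_{\cris}$, so it is enough to show $E(u)^r b \in W(R) \Rightarrow b \in W(R)$ for $b\in B^+_{\cris}$. (2) Now use that $E([\upi])$ generates $\mathrm{Ker}(\theta: W(R)\to \widehat{\O_{\bar K}})$ up to a unit of $W(R)$ — indeed $\theta(E([\upi]))=E(\pi)=0$ and $E'(\pi)$ is a uniformizer, so $E([\upi]) = (\text{unit})\cdot \xi$ where $\xi$ is a fixed generator of $\mathrm{Ker}\,\theta$, say $\xi = [\upi^{\flat}] - p$ type element. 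So reduce to: $\xi^r b \in W(R),\ b \in B^+_{\cris} \Rightarrow b \in W(R)$. (3) Prove this last statement by the structure of $\acris = B^+_{\cris}\cap$ (integral part): write $b = \sum_{n\ge 0} w_n \gamma_n(\xi)$ with $w_n \in W(R)$, $w_n \to 0$ $p$-adically — using the standard presentation $\acris = \{\sum w_n \gamma_n(\xi)\}$. Then $\xi^r b = \sum_n w_n \xi^r \gamma_n(\xi) = \sum_n w_n \frac{(n+r)!}{n!}\gamma_{n+r}(\xi)$. For this to lie in $W(R) = \acris^{\gamma_{\ge 1} = 0 \text{ part}}$ we need, comparing coefficients in the (topologically free) presentation, that $w_n\frac{(n+r)!}{n!} \in $ the appropriate submodule forcing $w_n = 0$ for $n\ge 1$; more carefully, $\frac{(n+r)!}{n!}$ is a $p$-adic unit times $p^{v_p((n+r)!/n!)}$, and one chases that the only way $\sum_{m\ge r} (\text{coeff})\gamma_m(\xi) \in W(R)$ is coeff $=0$ for $m\ge 1$, i.e. $w_n=0$ for $n\ge 1$ and $b=w_0 \in W(R)$.

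The main obstacle will be step (3): making rigorous the claim that $\acris$ is, as a topological $W(R)$-module, "free" on the divided powers $\gamma_n(\xi)$ in a way that lets one read off coefficients, so that $\xi^r b \in W(R)$ genuinely forces the vanishing of all $\gamma_{\ge 1}$-components of $b$. The subtlety is that $\acris$ is only the $p$-adic completion of the divided power envelope, so the "coefficients" $w_n$ are not unique as stated and one must argue modulo the ambiguity; the right formulation is via Fontaine's description $\acris/p^m$ and the filtration $\mathrm{Fil}^i\acris$, using that $\xi^r b$ lying in $W(R) = \mathrm{Fil}^0\acris$-with-no-divided-powers is equivalent to a congruence condition one propagates by induction on $r$ — indeed induction on $r$ reduces everything to the case $r=1$: $\xi b \in W(R) \Rightarrow b \in W(R)$, which says precisely that $W(R)$ is saturated in $\acris$ with respect to the non-zero-divisor $\xi$, a known property of $\acris$ since $\acris/\xi\acris \cong \widehat{\O_{\bar K}}$ has no $p$-torsion issues interfering (more precisely $\acris$ is $\xi$-torsion-free and $\acris/\xi = \widehat{\O_{\bar K}}$ while $W(R)/\xi = \widehat{\O_{\bar K}}$ as well, and the inclusion $W(R)/\xi \to \acris/\xi$ is an isomorphism, so $\xi b \in W(R)$ with $b \in \acris$ gives $b \bmod \xi \in \widehat{\O_{\bar K}}$ lifts to $W(R)$, and subtracting handles it). I would write out the $r=1$ case carefully and then induct.
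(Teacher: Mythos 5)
There is a genuine gap, and it sits exactly at the point where the real difficulty of the lemma lives. In your step (1) you set $b=\gt^r a$ and declare it "enough to show $E(u)^r b\in W(R)\Rightarrow b\in W(R)$." Even granting that statement, it only yields $\gt^r a\in W(R)$, not $a\in W(R)$: since $\gt$ is \emph{not} a unit of $W(R)$ (its reduction mod $p$ is $\underline\pi^{e/(p-1)}$, of valuation $\tfrac{1}{p-1}>0$), you still owe the implication "$\gt^r a\in W(R)\Rightarrow a\in W(R)$," which your outline never addresses and which does not follow from saturation with respect to $\xi$. Indeed $\varphi(\gt)$ equals $t$ up to a unit of $\acris$, and $v_R(\widetilde{\varphi(\gt)})=\tfrac{p}{p-1}$, whereas a generator $\xi$ of $\t{Ker}\,\theta$ has $v_R(\tilde\xi)=1$; so $\varphi(\gt)$ is $\xi$ times a \emph{non-unit} of $W(R)$, and divisibility by $(\varphi(\gt))^r$ is strictly stronger than divisibility by $\xi^r$ (equivalently by $E(u)^r$). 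Handling that extra non-unit factor is precisely what the paper's proof does, via Fontaine's ideals $I^{[r]}W(R)=\{x\in W(R):\varphi^n(x)\in\t{Fil}^rW(R)\ \forall n\}$: one checks $(\varphi(\gt))^r a\in I^{[r]}W(R)$ (this uses that the product lies in $W(R)$, so one may apply all powers of $\varphi$ and stay in $\t{Fil}^r$), and that $(\varphi(\gt))^r$ \emph{generates} the principal ideal $I^{[r]}W(R)$ by the valuation criterion $v_R=\tfrac{rp}{p-1}$, whence $a\in W(R)$. Your plan contains no substitute for this mechanism; the same objection applies to your earlier variant, since the infinite product $\prod_i\varphi^i(E(u)/c_0)$ only converges after dividing each factor by $p$ and produces a unit of $B^+_\cris$, not of $W(R)$.

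Two smaller points about your step (3). The coefficient-comparison in the presentation $\sum_n w_n\gamma_n(\xi)$ cannot be made to work as stated, for the reason you yourself flag (the $w_n$ are not unique), and the fallback claim that $W(R)/\xi\to\acris/\xi$ is an \emph{isomorphism} is false: surjectivity would force $\gamma_p(\xi)\in\xi\acris$, i.e.\ $\xi^{p-1}/p\in\acris$, which fails. What is true, and is all you need for $\xi$-saturation, is injectivity: if $\xi b\in W(R)$ with $b\in B^+_\cris$, then $\theta(\xi b)=0$, so $\xi b\in\t{Ker}(\theta|_{W(R)})=\xi W(R)$, and cancelling the non-zero-divisor $\xi$ gives $b\in W(R)$; iterating gives the $\xi^r$ (hence $E(u)^r$) statement cleanly. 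But even with that part repaired, the reduction gap described above remains, and it is the heart of the lemma.
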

\begin{proof}After multiplying some $p$-power, we may assume that $a \in \acris$. As in  \cite{fo3}, \S5.1,  define
$$I^{[r]}W(R)=\{ a\in W(R)| \varphi^n (a) \in \t{Fil}^r W(R), \t{ for any } n \geq 0\}.$$
Write $x= (\varphi(\gt))^r a $. We claim that $x \in I^{[r]}W(R)$.
By Example 5.3.3 in \cite{liu2} or Example \ref{example} below,
there exists a unit $\alpha \in \acris$ such that  $t= \alpha
\varphi(\gt)$. So $\varphi(\gt)\in \t{Fil}^1 W(R) $, then $
x=(\varphi(\gt))^r a \in \t{Fil}^r \acris \cap W(R)= \t{Fil}^r W(R)$. On the other hand,
 $\varphi(\gt)= c_0^{-1}E(u)\gt$, thus $\varphi^n(\gt)=
(\prod\limits_{i=0}^{n-1} \varphi^i(c_0^{-1}E(u)))\gt \in \t{Fil}^1
W(R)$. Therefore $\varphi^n(x), \varphi^{n}((\varphi(\gt))^r)\in\t{Fil}^r W(R) $  and then $x, (\varphi(\gt) )^r \in
I^{[r]}W(R)$. By proposition 5.1.3 in \cite{fo3}, $I^{[r]}W(R)$ is a
principal ideal and  $b \in I^{[r]}W(R)$ is a generator if and only
if $v_R(\tilde b) = \frac{rp}{p-1}$, where $\tilde b= b \mod p$. We
claim that $(\varphi(\gt))^r$ is a generator of $I^{[r]}W(R)$ by
computing $v_R(\widetilde{\varphi(\gt)})= \frac{p}{p-1}$. Since
$\varphi (\gt)= c_0^{-1}E(u)\gt$, we may choose $\gt$ such that
$\tilde \gt =\gt \mod p = \underline\pi^{\frac{e}{p-1}}$.  Thus
$v_R(\tilde \gt)= \frac{1}{p-1}$. Now $(\varphi(\gt))^r a \in
I^{[r]}W(R) $ and $(\varphi(\gt))^r$ is a generator of $I^{[r]}W(R)$. So
$a \in W(R)$.
\end{proof}
Now Proposition \ref{stable} implies that $\hM:= \hR
\otimes_{\varphi, \gs} \M $ is stable under $G$-action in
${B^+_\cris}\otimes_{\varphi, \gs} \M \inj T^\v \otimes_{\Z_p}
B^+_\cris$. And obviously the $G$-action on $\hM$ factors through
$\hat G$. It is easy to check that $\hM$ is a $(\varphi, \hat
G)$-module. It remains to check that $\hat T(\hM) \simeq T$. First,
by Lemma  \ref{compatibe}, $\hat T(\hM)|_{G_\infty} \simeq
T|_{G_\infty}$. Recall that $\hat \iota$ defined in \eqref{iotahat }
is compatible with the $G$-action on the both sides, and $\hat
\iota= \iota_{\gs} \otimes_{\gs, \varphi} W(R)$. Comparing $\hat
\iota$ with the top row of \eqref{phi-block}, we have the following
commutative diagram:
$$\xymatrix{  W(R)\otimes_{\hR} \hM
\ar[d]^{\wr} \ar[r]^-{\hat \iota} &
\hat T^\v(\hM) \otimes_{\Z_p} W(R)\ar[d]_{\wr } \\
W(R) \otimes_{\varphi, \gs} \M  \ar[r] &  T^\v \otimes_{\Z_p} W(R)
}$$ By the construction of $\hM$, we see that the left column is
compatible with the $G$-actions.  By Proposition \ref{comphat},
$\varphi(\gt^r) (T^\v_\gs(\M) \otimes _{\Z_p} W(R))\subset
W(R)\otimes_{\varphi,\gs} \M$. So the right column is also
compatible with the $G$-actions. Therefore, $\hat T(\hM) \simeq T$
as $G$-modules. This completes the proof the main theorem.

Unlike $\R_{K_0}$, so far we do not have an explicit description of
$\hR$. Put  $$w:= \frac{\tau(u)}{u}-1= \frac{\tau([\p])}{[\p]}-1=
\exp(-t)-1.$$ We see that $w \in \hR$ and $W(k)[\![u, w]\!]\subset \hR$
is stable under Frobenius and $\hat G$-action. Unfortunately, this
inclusion is strict. The following example show that the structure
of $\hR$ may be very complicated.
\begin{example} \label{example}It is well known that $t$ is the period of the
cyclotomic character $\chi$. On the other hand, $\gt$ is the period
of  the Kisin module for $\chi$, where is $\gs$-free rank-$1$ module
$\gs ^* := \gs\cdot f$ and $\varphi(f)= c_0^{-1}E(u)f$ with $f$ a
basis. Example 5.3.3 in \cite{liu2} showed that we may choose $t$
such that $t = c\varphi(\gt)$, where $c =\prod \limits_{i=0}^\infty
\varphi^n(\frac{\varphi(c^{-1}_0E(u)}{p})$. Then $\tau (c)\tau
(\varphi (\gt))= \tau (t)= t = {c}\varphi(\gt) $. Therefore $\tau
(\varphi(\gt))= \frac{c}{\tau (c)} \varphi(\gt)= \prod
\limits_{n=1}^\infty \varphi^n(\frac{E(u)}{\tau(E(u))})
\varphi(\gt)$. Let $\hat c = \frac{c}{\tau(c)}= \prod
\limits_{n=1}^\infty \varphi^n(\frac{E(u)}{\tau(E(u))})$. Since $c$
is a unit in $\R_{K_0}$, $\hat c \in \R_{K_0}$. On the other hand,
$E(u)$ is a generator of $\t{Fil}^1 W(R)$, so $\frac{
E(u)}{\tau(E(u))}$ is a unit in $W(R)$. Thus $\hat c \in \hR$. Let
$\hat\gs^* := (\hR \otimes_{\varphi,\gs} \gs) \cdot f= \hR \cdot f$
be the $(\varphi, \hat G)$-module corresponding to $\chi$. Then
$\hat G$-action on $\hat \gs ^*$ is given by $\tau (f)= \hat c f$.
\end{example}

\bibliographystyle{amsalpha}

\bibliography{biblio1}


\end{document}